\newtheorem{theorem}{Theorem}
\newtheorem{conjecture}[theorem]{Conjecture}
\newtheorem{corollary}[theorem]{Corollary}
\newtheorem{lemma}[theorem]{Lemma}
\newtheorem{proposition}[theorem]{Proposition}
\newcommand\Z{\mathbb Z}
\begin{document}

\title{On growth of homology torsion in amenable groups} 

\author{Aditi Kar, Peter Kropholler \and Nikolay Nikolov}

\begin{abstract}
Suppose an amenable group $G$ is acting freely on a simply connected  simplicial complex $\tilde X$ with compact quotient $X$. Fix $n \geq 1$, assume $H_n(\tilde X, \mathbb{Z})=0$ and let $(H_i)$ be a Farber chain in $G$. We prove that the torsion of the integral homology in dimension $n$ of $\tilde{X}/H_i$ grows subexponentially in $[G:H_i]$.  By way of contrast, if $X$ is not compact,  there are solvable groups of derived length 3 for which torsion in homology can grow faster than any given function. 
\end{abstract}

\maketitle
\section{Introduction}

Let $G$ be a group acting freely on a simply connected  simplicial complex $\tilde X$ with compact quotient $\tilde X /G=X$. Let $n \in \mathbb N$. The $n$-skeleton $X^{(n)}$ of $X$ is finite, $\pi_1(X)=G$ and $\tilde X$ is the universal cover of $X$. Let $C_n(X)$ be the free abelian group with basis the set of simplices in dimension $n$ of $X$.  The boundary maps $\delta_n: C_n(X) \rightarrow C_{n-1}(X)$ define the homology groups $H_n(X,\mathbb Z)= \ker \delta_n / \mathrm{im } \ \delta_{n+1}$. Denote the size of the torsion subgroup of $H_n(X, \mathbb Z)$ by $T_n(X)$.

Let $(H_i)$ be a chain of finite index subgroups of $G$ ordered by inclusion. Let $\Omega$ be the profinite space $\underleftarrow{\lim}\  G/H_i$. The counting measure on the finite sets $G/H_i$ induces a probability measure $\mu$ on $\Omega$ and $G$ acts on $\Omega$ by measure preserving homeomorphisms. The chain $(H_i)$ is called \emph{Farber} if this action of $G$ is essentially free, i.e. $Stab_G(x)=1$ for all $x \in \Omega$ except a null set. This is equivalent to saying, for any $g \in G \backslash \{1\}$,  $$\lim_{i \rightarrow \infty} 
\frac{|\{  x \in G/H_i  \ | \  gx=x \} |}{[G:H_i]}=0 .$$

An example of a Farber chain  $(H_i)$ is when each $H_i$ is normal in $G$ and $\bigcap_i H_i= \{1\}$.

\begin{theorem} \label{main} Let $G$ be an amenable group acting freely on a simply connected simplicial complex $\tilde X$ as above with compact quotient $X$. Let $n \geq 1$ and assume that $H_n(\tilde X, \mathbb Z)=0$.  Let $(H_i)$ be a Farber chain
in $G$ and let $Y_i=\tilde X/H_i$ be the corresponding covering space $Y_i$ of $X$.
Then,  

\[ \lim_{i \rightarrow \infty} \frac{\log T_n(Y_i)}{[G:H_i]}=0\]
\end{theorem}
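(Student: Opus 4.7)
The plan is to estimate $\log T_n(Y_i)$ by the log of a determinantal quantity attached to the $(n+1)$-boundary operator, apply the L\"uck--Farber approximation theorem in its amenable-group form to reduce to a Fuglede--Kadison determinant on $\tilde X$, and use the hypothesis $H_n(\tilde X,\mathbb Z)=0$ together with a refined lattice-theoretic argument to show the limit vanishes.

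\textbf{Naive setup.} Write $d_i := \partial_{n+1}^{(i)} \colon C_{n+1}(Y_i)\to C_n(Y_i)$. Since $C_n(Y_i)/Z_n(Y_i)$ embeds in the torsion-free group $C_{n-1}(Y_i)$, the torsion of $H_n(Y_i)$ equals the torsion of $\mathrm{coker}(d_i)$, which is the product of the positive invariant factors of $d_i$. A Cauchy--Binet argument (the gcd of $r\times r$ minors of an integer matrix is dominated by the product of its nonzero singular values) yields
\[\log T_n(Y_i) \;\le\; \tfrac{1}{2}\,\log{\det}^{\prime}(d_i d_i^{\ast}),\]
and for amenable $G$ along Farber chains --- using the approximation theorem extended to the amenable case via the determinant conjecture of Elek--Schick --- the right-hand side divided by $[G:H_i]$ converges to $\log\det^{\mathrm{FK}}_G(dd^{\ast})$, where $d$ is the bounded extension of $\partial_{n+1}$ to $\ell^{2}(G)^{c_{n+1}}\to\ell^{2}(G)^{c_n}$. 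The hypothesis $H_n(\tilde X)=0$ enters as saying $d$ surjects onto the saturated $\mathbb Z G$-submodule $Z_n(\tilde X)=\ker\partial_n$ of $C_n(\tilde X)$, so in particular $b_n^{(2)}(\tilde X;G)=0$.

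\textbf{Main obstacle.} The Cauchy--Binet bound is not tight for $T_n$ in general: a diagonal embedding $\mathbb Z G\hookrightarrow\mathbb Z G^{2}$ (which can be realized inside an honest chain complex with $H_n(\tilde X)=0$) has $T_n(Y_i)\equiv 1$ while $\log\det^{\mathrm{FK}}_G(dd^{\ast})=\log 2>0$, so the spectral bound already fails to deliver zero in the limit. The proof must therefore either (i) replace Cauchy--Binet by a sharper bound that sees the integer-lattice (invariant-factor) structure of $d_i$, not merely its singular values, or (ii) decompose $Y_i$ into F{\o}lner pieces via Ornstein--Weiss quasi-tiling, on each of which the local complex is asymptotically the $\tilde X$-complex, so that $H_n(\tilde X)=0$ renders the local torsion trivial up to a boundary error of size $o([G:H_i])$. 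Matching such a refinement with uniform log-integrability of the spectral measures near zero (supplied by the amenable-group determinant conjecture) is the technical heart of the amenable input. The principal challenge is making the combinatorial/F{\o}lner comparison quantitative: invariant factors, unlike singular values, are not multiplicative under quasi-tiling without careful book-keeping of how the local lattices assemble across F{\o}lner boundaries.
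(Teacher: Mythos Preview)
Your proposal is not yet a proof: the Cauchy--Binet / Fuglede--Kadison bound you start with is, as you yourself observe, generically too weak to force the limit to vanish (your $\log 2$ counterexample is exactly right), and neither of your suggested fixes (i)--(ii) is actually carried out. So there is a real gap, and it lies precisely where you locate it---how to exploit the integer lattice structure along a F{\o}lner decomposition.

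The paper's argument is essentially your option (ii), executed in an elementary way that avoids determinant approximation altogether. Using F{\o}lner coset representatives $F_i$ for $H_i$ (obtained via Weiss together with Ornstein--Weiss quasi-tiling), one builds a fundamental domain $\mathcal F_i=\bigcup_{g\in F_i}\mathcal D g$ for $H_i$ on $\tilde X$ and its F{\o}lner-interior $\mathcal J_i$ (those translates $\mathcal D g$ whose closure lies in $\mathcal F_i$); the boundary strip $\mathcal B_i=\mathcal F_i\setminus\mathcal J_i$ has $m_i$ simplices with $m_i/[G:H_i]\to0$. The crucial combinatorial lemma is that any $n$-cycle in $Y_i$ supported on $p(\mathcal J_i)$ \emph{lifts} to an $n$-cycle in $\tilde X$: boundaries of interior simplices stay in $\mathcal F_i$, where the covering map $p$ is injective on chains, so $\delta_n(\tilde v)=0$ follows from $\delta_n(v)=0$. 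Hence such cycles are null by $H_n(\tilde X)=0$. Formally one gets an exact sequence
\[
H_n(\tilde X)_{\mathcal J_i}\longrightarrow H_n(Y_i,\mathbb Z)\longrightarrow H_n\bigl(Y_i,p(\mathcal J_i),\mathbb Z\bigr),
\]
so $H_n(Y_i)$ embeds in a quotient $W_i/W_{i,0}$ with $W_i\cong\mathbb Z^{m_i}$ and $W_{i,0}$ spanned by vectors of $l_1$-norm at most $n+2$ (projections of simplicial boundaries). An elementary bound---the torsion of $\mathbb Z^m/\sum\mathbb Z v_j$ is at most $k^m$ when all $\|v_j\|_1\le k$---then gives $T_n(Y_i)\le(n+2)^{m_i}$, and $m_i=o([G:H_i])$ finishes.

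The difficulty you feared---that invariant factors do not assemble across F{\o}lner boundaries---is sidestepped entirely: one does not glue local torsion. The interior contribution is shown to be \emph{zero} (lift and bound in $\tilde X$), and only the boundary strip survives; there the uniform $l_1$-bound on boundary vectors reduces everything to the rank $m_i$. No spectral approximation, no FK determinants, no log-integrability are required.
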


If $G$ is of type $F_{n+1}$ we can take $X$ to be the $n+1$ skeleton of $K(G,1)$ and $\tilde X$ to be the universal cover of $X$, then $H_n(X,\mathbb Z)$ is simply $H_n(G, \mathbb Z)$ and we write $T_n(G)$ for the size of the torsion subgroup of $H_n(G, \mathbb Z)$. Since $\tilde X$ is $n$-connected, Theorem \ref{main} yields the immediate 

\begin{corollary} \label{cor} Let $G$ be an infinite amenable group of type $F_{n+1}$ and let $H_i$ be a Farber chain in $G$.
Then,  

\[ \lim_{i \rightarrow \infty} \frac{\log T_n(H_i)}{[G:H_i]}=0\]
\end{corollary}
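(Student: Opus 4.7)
The plan is to derive this corollary as a direct specialization of Theorem \ref{main}, the only work being to package a group of type $F_{n+1}$ as the fundamental group of a suitable finite complex whose universal cover is $n$-connected.

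First I would invoke the type $F_{n+1}$ hypothesis to choose a $K(G,1)$ CW-complex $K$ with only finitely many cells in each dimension $\leq n+1$. Setting $X$ equal to the $(n+1)$-skeleton of $K$ gives a finite complex with $\pi_1(X) = G$, and its universal cover $\tilde X$ is the $(n+1)$-skeleton of the contractible universal cover $\tilde K$. Since attaching cells of dimension $\geq n+2$ does not alter $\pi_k$ for $k \leq n$, the space $\tilde X$ is $n$-connected, and in particular $H_n(\tilde X, \mathbb Z) = 0$. The hypotheses of Theorem \ref{main} are therefore in force.

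Next I would identify $T_n(Y_i)$ with $T_n(H_i)$. For each $i$, the quotient $Y_i = \tilde X/H_i$ coincides with the $(n+1)$-skeleton of $\tilde K/H_i$, and the latter is a $K(H_i,1)$. Since the $n$-th homology of a CW-complex is determined by its $(n+1)$-skeleton, $H_n(Y_i,\mathbb Z) \cong H_n(H_i,\mathbb Z)$, and so $T_n(Y_i) = T_n(H_i)$. Substituting this identification into the conclusion of Theorem \ref{main} yields the stated limit. There is no genuine obstacle at this stage: the corollary is a formal unpacking of Theorem \ref{main} combined with the standard observation that low-dimensional homology depends only on a finite skeleton when $G$ has type $F_{n+1}$, so the substantive content lies entirely in the theorem itself.
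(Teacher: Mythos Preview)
Your approach is essentially identical to the paper's: take $X$ to be the $(n+1)$-skeleton of a $K(G,1)$ with finite $(n+1)$-skeleton, observe that $\tilde X$ is $n$-connected so $H_n(\tilde X,\mathbb Z)=0$, and apply Theorem~\ref{main}. The only minor wrinkle is that Theorem~\ref{main} is stated for simplicial complexes while you phrase everything in terms of CW-complexes, but this is harmless since a finite CW $(n+1)$-skeleton can be replaced by a homotopy-equivalent finite simplicial complex without affecting any of the homological identifications you use.
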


In the case when $G$ is elementary amenable or more generally, $G$ contains an infinite normal elementary amenable subgroup, this has been proved by W. L\"uck \cite{Lueck2013}. There is a lot of interest in computing the above limit $\frac{\log T_n(H_i)}{[G:H_i]}$ for arithmetic lattices in semisimple Lie groups, see \cite{berg}, \cite{AGN} and its relationship to analytical torsion, \cite{Lueck2}.  We consider here, the case when $G$ is amenable and provide an elementary proof of the vanishing of torsion growth. In the case when $X$ is an aspherical manifold and $(H_i)$ is an exhausting normal chain Theorem \ref{main} is a special case of \cite[Theorem 1.6]{sauer}, see also \cite{frigerio} for a different approach. It will be interesting Theorem \ref{main} to the case when $G$ has an infinite normal amenable subgroup. Another natural direction for further study is to relax the conditions on $\tilde X$ and relate the growth of torsion in $H_n(Y_i,\mathbb Z)$ to invariants of $\tilde X$ in the spirit of \cite{ThomLi}.
\bigskip

In Corollary \ref{cor}, the hypothesis that the amenable group $G$ is of type $F_{n+1}$, is indeed necessary even for $n=1$. By way of contrast we show that if $G$ is not finitely presented and $M<G$ with $[G:M]$ finite, then the rate of growth of $T_1(M)= | tor H_1(M,\mathbb Z)|$ is not  bounded by any function of $[G:M]$.
\begin{theorem} \label{grow} Let $f: \mathbb N \rightarrow \mathbb N$ be any function. There exists a finitely generated residually finite group $G$, together with a normal chain  of  subgroups of finite index $G>M_1>M_2> \cdots $ such that $\bigcap_i M_i = \{1\}$ and 
\[ T_1(M_i) > f([G:M_i]), \quad \forall i \in \mathbb N. \]

In fact $G$ can be taken to be a solvable group of derived length 3 or an extension of an abelian group by Grigorchuk's $2$-group of subexponential growth.
\end{theorem}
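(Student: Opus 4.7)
My plan is to construct, given the function $f$, a restricted wreath product $G = A \wr Q$ where $A$ is a finite abelian group and $Q$ is a finitely generated, residually finite, non-finitely-presented group of the appropriate structural type: for the derived-length-$3$ assertion I would take $Q = \mathbb{Z}\wr\mathbb{Z}$ (which is metabelian, making $G$ solvable of derived length $3$), and for the abelian-by-$\Gamma$ assertion I would take $Q = \Gamma$, the first Grigorchuk group. In both cases $G$ is finitely generated and residually finite. The size of $A$, as well as the chain of subgroups, will be chosen depending on $f$.

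Using residual finiteness of $Q$, I would pick a descending normal chain $K_1 > K_2 > \cdots$ in $Q$ with $\bigcap_i K_i = \{1\}$, and set $M_i = \tilde A \rtimes K_i \lhd G$ where $\tilde A = \bigoplus_Q A$. The split extension $1 \to \tilde A \to M_i \to K_i \to 1$ gives
\[
M_i^{ab} \cong \tilde A_{K_i} \oplus K_i^{ab},
\]
and since $\tilde A = A[Q]$ is a free $A[K_i]$-module of rank $[Q:K_i]$, one computes $\tilde A_{K_i} \cong A^{[Q:K_i]}$, yielding the torsion lower bound $T_1(M_i) \ge |A|^{[G:M_i]}$. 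Choosing $|A|$ large enough depending on $f$ would then already handle any $f$ of at most exponential growth.

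For arbitrary (possibly super-exponential) $f$, I would iterate the wreath construction, or equivalently refine the chain: replace $M_i$ by $B_i \rtimes K_i$, where $B_i \le \tilde A$ is a $Q$-invariant submodule of finite index (for instance $B_i = \ker(A[Q] \twoheadrightarrow A[Q/K_i])$). The long exact sequence of $K_i$-coinvariants attached to $0 \to B_i \to \tilde A \to \tilde A/B_i \to 0$ then produces the sharper bound $T_1(M_i) \ge |A|^{d_i\cdot[Q:K_i]}$, where $d_i$ depends on the $\mathbb{F}_{|A|}$-dimension of $K_i^{ab}/|A|$. Since $Q$ is \emph{not} finitely presented, no global constraint (unlike in Corollary~\ref{cor}) bounds the growth of $d_i$ along a suitably chosen chain, so this bound can be tuned to defeat any prescribed~$f$.

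The hardest step will be the final calibration: simultaneously ensuring that (a)~the chain $(M_i)$ has trivial intersection --- for which I would intersect each $M_i$ with a further finite-index $G$-normal subgroup drawn from a residually-finite chain in $G$ cutting into $\tilde A$, preserving the torsion bound; and (b)~the parameters $|A|$, the depth of any iteration, and the chain $(K_i)$ are matched to $f$ so that $T_1(M_i) > f([G:M_i])$ holds \emph{for every} $i$, not merely asymptotically. Both tasks are delicate but tractable in the wreath-product framework, relying crucially on the fact that $G$ is not finitely presented, which is exactly the hypothesis of Corollary~\ref{cor} that fails here.
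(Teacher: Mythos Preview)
Your approach has a genuine gap that cannot be repaired within the wreath-product framework you describe. The problem is that with $A$ finite, the group $G = A\wr Q$ simply cannot produce torsion growing faster than a fixed exponential in the index, so it cannot defeat an arbitrary $f$.

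Concretely: for $M_i = \tilde A\rtimes K_i$ you correctly obtain $T_1(M_i)\ge |A|^{[G:M_i]}$, but this fails as soon as $f(n) = (|A|+1)^n$, and no choice of $|A|$ made in advance handles every $f$. Your proposed refinement $M_i = B_i\rtimes K_i$ with $B_i = \ker(A[Q]\to A[Q/K_i])$ moves in the wrong direction. Writing $m=[Q:K_i]$, one has $[G:M_i] = m\cdot|A|^{m}$, while $(B_i)_{K_i}\cong (K_i^{ab}\otimes A)^{m}$, so your lower bound is $|A|^{d_i m}$ with $d_i=\dim_{\mathbb F_{|A|}} K_i^{ab}/|A|$. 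But the Schreier index formula forces $d_i\le 1+(r-1)m$ for $r$ the number of generators of $Q$, regardless of whether $Q$ is finitely presented; hence $\log T_1(M_i) = O(m^2)$ while $\log[G:M_i]\asymp m$, and the torsion is now \emph{sub}exponential in the index. Iterating the wreath product would raise the derived length beyond $3$, so that escape route is also closed.

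The paper's construction avoids this obstruction by taking the abelian kernel $W$ to have \emph{unbounded} exponent. It fixes in $Q$ a strictly ascending chain $P_0<P_1<\cdots$ of profinitely closed subgroups (this is where the lamplighter or Grigorchuk group enters), and builds $W$ as a cyclic $\mathbb Z Q$-module assembled from permutation modules $\mathbb Z/n_i\mathbb Z[P_i\backslash Q]$ for a sequence $(n_i)$ to be chosen. The crucial feature is that the index $|W/W_i|$ depends only on $n_0,\dots,n_{i-1}$, while the torsion in $(W_i)_{Q_i}$ is at least $n_i$; so one chooses $n_i$ \emph{after} computing $|Q/Q_i|\cdot|W/W_i|$, large enough to beat $f$ at that stage. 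This inductive decoupling of ``index already incurred'' from ``torsion to be created'' is the mechanism your fixed-exponent module lacks.
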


Our construction is very general and gives examples of the form $A \rtimes Q$ with $A$ abelian for any residually finite group $Q$ which has an infinite ascending chain of subgroups  closed in the profinite topology. This raises the natural question: which residually finite groups satisfy the ascending chain condition on closed subgroups?

We propose the following

\begin{conjecture}
Every finitely generated amenable residually finite  group which satisfies the ascending chain condition on closed subgroups is virtually solvable of finite rank.
\end{conjecture}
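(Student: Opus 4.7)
\smallskip
\noindent\textbf{Proof proposal.} The plan is to combine the Lubotzky--Mann--Segal characterisation of polynomial subgroup growth (every finitely generated residually finite group of polynomial subgroup growth is virtually solvable of finite rank) with the structural constraints that amenability imposes on a profinite completion, reducing the conjecture to an analysis of the closed subgroup lattice of $\hat G$.

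First I would pass to the profinite completion $\hat G$. Since $G$ is finitely generated and residually finite, $G$ sits densely in $\hat G$, and any ascending chain $H_1 \leq H_2 \leq \cdots$ of closed subgroups of $G$ lifts via topological closure to an ascending chain $\overline{H_i} \leq \hat G$ satisfying $\overline{H_i} \cap G = H_i$, so the ACC hypothesis forces stabilisation of every such chain. The first main step is to use this, together with finite generation, to bound the Pr\"ufer rank of $\hat G$: a uniform $d$ for which every closed subgroup is topologically $d$-generated. For each prime $p$ the Sylow pro-$p$ subgroup of $\hat G$ should thereby become $p$-adic analytic by Lazard's theorem, and the challenge is to make the dimensions uniform in $p$ using the existence of a finitely generated dense subgroup.

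Once $\hat G$ is known to have bounded rank, amenability of $G$ is brought in to exclude non-solvable structure. The Tits alternative applied to arithmetic lattices implies that a finitely generated dense subgroup of a profinite group admitting a non-solvable $p$-adic analytic quotient contains a non-abelian free subgroup, so amenability of $G$ forces every such quotient to be virtually solvable. Wilson's classification of just-infinite profinite groups may then be invoked, the branch case being ruled out by the bounded rank already established, yielding that $\hat G$ is virtually pro-(solvable of finite rank). Since the subgroup growth of $G$ is dominated by that of $\hat G$, which is polynomial under these constraints, the Lubotzky--Mann--Segal theorem delivers that $G$ itself is virtually solvable of finite rank.

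The main obstacle I foresee is the first step: closed subgroups of $\hat G$ need not all arise as closures of closed subgroups of $G$, so ACC in $G$ only directly controls a proper sublattice of the closed subgroups of $\hat G$. The crucial technical point is to leverage amenability, most naturally in the form of F\o{}lner averaging over the finite quotients $G/H_i$, to promote the controlled ACC to a global bound on rank: one would hope to show that a putative infinite ascending chain of closed subgroups in $\hat G$ can be refined, via F\o{}lner sets of $G$, to an ascending chain whose intersections with $G$ are themselves closed and properly ascending, contradicting the hypothesis on $G$.
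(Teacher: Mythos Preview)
The statement you are attempting to prove is stated in the paper as a \emph{conjecture}, not a theorem: the authors explicitly ``propose'' it and do not claim a proof. What the paper does prove is the special case where $G$ is elementary amenable (Theorem~\ref{sol}), and that argument proceeds very differently from your outline: it first handles metabelian, then nilpotent-by-abelian, then general solvable groups via an induction on derived length using Mal$'$cev's theorem on solvable linear groups, and finally reduces the elementary amenable case to the solvable one by a variety-theoretic argument on the hierarchy $\mathfrak X_\alpha$. Amenability beyond the elementary case is never used, and F{\o}lner sets play no role in that section.

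As for your proposed route to the full conjecture, the gap you yourself flag is genuine and, as far as is known, unresolved. The ascending chain condition on profinitely closed subgroups of $G$ controls only those closed subgroups of $\hat G$ of the form $\overline{H}$ with $H$ closed in $G$; arbitrary closed subgroups of $\hat G$ need not be of this type, so one cannot immediately deduce a rank bound on $\hat G$. Your suggestion to ``refine'' a chain in $\hat G$ via F{\o}lner averaging so that the intersections with $G$ become closed and strictly ascending is not a known technique, and it is unclear what mechanism would force the intersections $K_i\cap G$ of a strictly ascending chain $(K_i)$ in $\hat G$ to be either closed in $G$ or strictly ascending. Without that step the remainder of the argument (Lazard, Tits alternative, Wilson's classification, Lubotzky--Mann--Segal) never gets off the ground. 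In short, your proposal is a plausible strategy for attacking an open problem, but it does not constitute a proof, and the paper makes no claim to have one.
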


We offer the following special case as evidence.

\begin{theorem}
\label{sol}
Let $G$ be an elementary amenable group which is finitely generated and residually finite. Then the following are equivalent:
\begin{enumerate}
\item
$G$ is virtually solvable of finite rank.
\item
$G$ has the maximal condition on subgroups that are closed in the profinite topology.
\end{enumerate}
\end{theorem}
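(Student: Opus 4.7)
The plan for $(1)\Rightarrow(2)$ is to argue directly by induction on the minimax length of $G$. A finitely generated, residually finite, virtually solvable group $G$ of finite Prüfer rank is minimax by the Mal'cev--Robinson theorem, and residual finiteness precludes quasi-cyclic sections $C_{p^\infty}$ (which admit no proper finite-index subgroups). Hence $G$ has a finite characteristic series whose factors are cyclic (finite or infinite) or rational cyclic of the form $\mathbb{Z}[1/n]$. For each such factor the lattice of closed subgroups in the intrinsic profinite topology is Noetherian: for $\mathbb{Z}$ and finite cyclic groups this is classical, while for $\mathbb{Z}[1/n]$ the closed subgroups turn out to be precisely $\{0\}$ together with the subgroups $m\mathbb{Z}[1/n]$ where $m$ ranges over positive integers whose prime factors are coprime to $n$, so ascending chains correspond to descending divisibility chains and terminate. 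The inductive step then assembles max on closed subgroups of $G$ from max on $N$ and on $G/N$ in the extension $1\to N\to G\to G/N\to 1$, where $N$ is the first non-trivial term of the series.

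For $(2)\Rightarrow(1)$ I argue by contrapositive: assuming $G$ is finitely generated, residually finite, elementary amenable, and not virtually solvable of finite rank, I will exhibit an infinite strictly ascending chain of closed subgroups. The locally finite radical $\tau(G)$ must be finite, since otherwise an infinite ascending chain of finite subgroups inside $\tau(G)$ already provides such a chain (any finite subset of the Hausdorff profinite topology of a residually finite group is closed). Passing to $G/\tau(G)$ reduces to the case $\tau(G)=1$. Then the Hillman--Linnell structure theorem for elementary amenable groups, together with the failure of virtual finite rank, forces $G$ to admit an abelian normal section $B/C$ of infinite $\mathbb{Q}$-rank. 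I would treat $B/C$ as a module over the finitely generated quotient $Q=G/C_G(B/C)$, produce a strictly ascending sequence of finitely generated $Q$-invariant subgroups of $B/C$, and lift it to an ascending chain of subgroups of $G$ sandwiched between $C$ and $B$.

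The main obstacle will be verifying that the lifted chain consists of subgroups closed in the profinite topology of $G$ itself, not merely in the topology of $B$; finite-index subgroups of $B$ need not extend to finite-index subgroups of $G$. The plan is to choose the terms of the chain to be normal in $G$ by taking $G$-normal closures, so that consecutive terms can be separated by open normal subgroups of $G$; residual finiteness of $G$ together with the action of the finitely generated group $Q$ on $B/C$ through finite quotients should yield the required separating open subgroups. A concrete realization picks $x_1,x_2,\ldots\in B$ whose images in $B/C$ generate pairwise distinct $Q$-invariant submodules with $\mathbb{Q}$-linearly independent projections, and lets $H_k$ denote the normal closure in $G$ of $C\cup\{x_1,\ldots,x_k\}$; the remaining task is to check the strict inclusion $H_k<H_{k+1}$ and closedness of each $H_k$ in the profinite topology of $G$.
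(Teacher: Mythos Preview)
Your direction $(1)\Rightarrow(2)$ is essentially the paper's argument: both reduce to the abelian minimax case and assemble the result through extensions. One point you gloss over (as does the paper, though it has Lemma~19 in reserve) is that the extension step needs the subspace topology on $N$ induced from $G$ to be comparable with $N$'s own profinite topology; for minimax groups this is fine, but it deserves a sentence.

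The direction $(2)\Rightarrow(1)$ has a genuine gap, and it is precisely the obstacle you flag but do not overcome. Closedness of a normal subgroup $H_k\lhd G$ is equivalent to residual finiteness of $G/H_k$, and there is no reason for the normal closure of $C\cup\{x_1,\dots,x_k\}$ to have a residually finite quotient: normal closures of finite sets are not profinitely closed in general, and your appeal to ``the action of $Q$ on $B/C$ through finite quotients'' does not supply the missing congruence subgroups of $G$. A second gap is in the reduction via Hillman--Linnell: that theorem applies when $h(G)<\infty$, in which case (after $\tau(G)=1$) it gives a solvable normal $S$ of finite Hirsch length with $G/S$ locally finite, but $G/S$ may well be infinite, and then there is no abelian normal section of infinite $\mathbb Q$-rank at all. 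Your outline does not handle this case.

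The paper's route is substantially different and is organised around exactly this closedness problem. It first reduces from elementary amenable to virtually solvable via the $\mathfrak X_\alpha$ hierarchy (using that closures of finitely generated subgroups lie in the variety they generate), then from solvable to nilpotent-by-abelian by an inductive argument on derived length that invokes Mal$'$cev's theorem on solvable linear groups to control the action on certain finite $p$-sections and on a pro-$p$ piece of the profinite completion. The base case is metabelian, where Hall's theorem that finitely generated metabelian groups are residually finite guarantees that $[G,G]$ and $[G,G]^p$ are closed; an infinite elementary abelian quotient $[G,G]/[G,G]^p$ then yields a strictly ascending chain of \emph{finite} (hence automatically closed) subgroups. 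In other words, the paper never tries to prove that an arbitrary submodule of an abelian section is closed; it manufactures a situation where the ascending chain consists of finite groups.
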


Related to this is the theorem of Jeanes and Wilson that finitely generated solvable groups in which every subgroup is closed in the profinite topology are polycyclic, see \cite{JW}.

The finitely generated solvable groups that have finite rank are all \emph{minimax} in the sense that they have a series
$1=G_0\triangleleft G_1\triangleleft\dots\triangleleft G_n=G$ of finite length in which the factors are cyclic or quasicyclic. A \emph{quasicyclic} group is a group of $p$-power roots of unity in $\mathbb C$, and there is one such group up to isomorphism for each prime $p$, denoted by $C_{p^\infty}$. 

Theorem \ref{main} is proved in section \ref{prf} modulo a technical result on F{\o}lner sets which is proved in Section \ref{amenable}. Theorem \ref{grow} is proved in Section \ref{solv} and Theorem \ref{sol} is proved in Section \ref{proofsol}.

\section{Proof of Theorem \ref{main}} \label{prf}
We  find presentations $\langle X_i | R_i \rangle $ of the subgroups $H_i$ of $G$ such that $|X_i|/|G:H_i| \rightarrow 0$ as $i \rightarrow \infty$ while at same time the lengths of all relations of all $R_i$ stay bounded.
In case $n=1$ the bound on the torsion of $H_i^{ab}$ follows from the following fundamental lemma. For a vector $v \in \mathbb Z^m$, by $||v||$ we denote the $l_1$-norm of $v$, i.e. the sum of the absolute values of the coordinates of $v$.

\begin{lemma} \label{tor}Let $m,k,l \in \mathbb N$ and let $v_1, \ldots, v_l \in \mathbb Z^m$ with $||v_i|| \le k$ for all $i=1, \ldots, l$. Let $A=\mathbb Z^m/ \sum_{i=1}^l \mathbb Z v_i$. Let $tor(A)$ denote the size of the torsion subgroup of $A$. Then $tor(A) \leq k^{m}$.
\end{lemma}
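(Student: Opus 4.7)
The plan is to realize $tor(A)$ as a divisor of the determinant of a well-chosen square integer submatrix of the matrix whose columns are the $v_j$, and then to bound that determinant by Hadamard's inequality.

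Assemble the $v_j$ as the columns of an $m \times l$ integer matrix $M$, so that $A = \mathbb Z^m / M\mathbb Z^l$. Let $r = \mathrm{rank}_{\mathbb Q}(M)$. Bringing $M$ to Smith normal form produces invariant factors $d_1 \mid d_2 \mid \cdots \mid d_r$ and an isomorphism $A \cong \mathbb Z^{m-r} \oplus \bigoplus_{j=1}^{r} \mathbb Z/d_j\mathbb Z$, whence $tor(A) = d_1 d_2 \cdots d_r$. A classical identity from the theory of Smith normal form says that $d_1 d_2 \cdots d_r$ is the $\gcd$ of all $r \times r$ minors of $M$; in particular $tor(A)$ divides, and therefore is bounded above by, the absolute value of any single nonzero $r \times r$ minor of $M$.

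I would then pick $r$ columns of $M$ spanning its column space over $\mathbb Q$, and then $r$ rows whose intersection with these columns yields a nonsingular submatrix $B \in M_r(\mathbb Z)$. Each column of $B$ is a restriction of some $v_{i_j}$ to $r$ of its coordinates, so its $l_2$-norm is at most its $l_1$-norm, hence at most $k$. Hadamard's inequality then gives
\[
tor(A) \leq |\det B| \leq \prod_{j=1}^{r} \|B_{\cdot, j}\|_2 \leq k^r \leq k^m,
\]
where the final inequality uses $r \leq m$ and $k \geq 1$ (the $k = 0$ case is trivial, for then all $v_j$ vanish and $A = \mathbb Z^m$ is torsion-free).

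There is no genuinely hard step here. The one ingredient that is not wholly elementary is the identification of $d_1 \cdots d_r$ with the $\gcd$ of the $r \times r$ minors of $M$, but this is a standard consequence of the Smith normal form construction and can simply be cited.
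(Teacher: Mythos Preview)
Your proof is correct and follows essentially the same approach as the paper: reduce via Smith normal form to bounding a single nonzero $r \times r$ minor, then bound that determinant by $k^r \le k^m$ using the column norms. The only cosmetic difference is that you invoke Hadamard's inequality together with $\|\cdot\|_2 \le \|\cdot\|_1$, whereas the paper bounds the determinant directly from its permutation expansion by $\sum_{\pi}\prod_i |\nu_{i,\pi(i)}| \le \prod_i \|v_i\|_1$; both yield the same bound.
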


\proof Let $L=(v_1,\ldots, v_l)$ be the $m \times l$ matrix having $v_i$ as column vectors. Let $r$ be the rank of $L$. Now  $tor(A)$ is the greatest common divisor $\Delta_r(L)$ of all determinants of $r \times r$ minors of $L$, see \cite[Theorem 3.9]{Jacobson}. This can easily be proved directly: The matrix $L$ can be transformed to a diagonal matrix $L'$ using row and column operations. Then $tor(A)$ is the product of the non-zero $r$ entries on the diagonal of $L'$ which is just $\Delta_r(L')$. On the other hand the row and column operations do not change $\Delta_r$ and so $\Delta_r(L)=\Delta_r(L')=t(A)$. 

To prove Lemma \ref{tor} it is thus sufficient to bound the size of just one determinant of a $r \times r$ minor. So we need to prove the inequality under the assumption that $l=m=r$ and that $A$ is torsion i.e. that the determinant of the matrix $L=(v_1, \ldots, v_m)$ is non-zero. We will show that
$| \det L| \leq k^{m}.$

Let $v_i= (\nu_{j,i})$ with $\nu_{j,i} \in \mathbb Z$ be a column vector in $\mathbb Z^m$.
Now \[|\det L| \leq \sum_{\pi \in S_n} \prod_{i=1}^m |\nu_{i, \pi(i)}|  \leq \prod_{i=1}^m ||v_i|| \leq k^m.\]
\endproof

\subsection*{\emph{Folner} choice of coset representatives} Let $B\subset V$ be a subset of the vertices $V$ of a graph $\Gamma$. By $\partial(B)$, we denote the set of edges joining vertices from $B$ to vertices in its complement $V \backslash B$. Also, $\Gamma_B$ denotes the induced graph on $B$ from $\Gamma$ (i.e. the vertices in $B$ and those edges of $\Gamma$ whose ends are in $B$.) 

The Cayley graph $\Gamma(G,S)$ of a group $G$ with respect to a generating set $S$ is defined to have vertices $G$ and edges $(x,sx)$ for $x \in G$ and $s \in S \cup S^{-1}$. For $\epsilon >0$ a finite subset $B \subset G$ is said to be $(\epsilon,S)$- F{\o}lner if \[ |\partial (F_i)| |F_i|^{-1} < \epsilon. \] The following Theorem allows for a F{\o}lner choice of coset representatives for a Farber chain of subgroups in $G$.

\begin{theorem}
 \label{reps}
 Let $G$ be an infinite amenable group generated by a finite set $S$ and let $\Gamma= \Gamma(G,S)$ be the Cayley graph of $G$ with respect to $S$. Let $(H_i)$ be a Farber chain in $G$. There exists a sequence of subsets $F_i \subset G$ such that $F_i$ is a set of left coset representatives for $H_i$ in $G$ and $|\partial (F_i)| |F_i|^{-1} \rightarrow 0$ with $i \rightarrow \infty$.
\end{theorem}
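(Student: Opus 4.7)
The plan is to construct the transversal $F_i$ by approximately tiling the coset space $\Omega_i:=G/H_i$ by translates $\{bS_i:b\in B\}$ of a single Følner set $B$ of $G$, then lifting this tiling back to $G$. This is a discrete Rokhlin phenomenon made possible by essential freeness of the $G$-action on the profinite limit $\Omega=\varprojlim G/H_i$ supplied by the Farber condition.

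The technical core, which is presumably what Section~\ref{amenable} provides, is the following descent statement: for every $\epsilon>0$ one can find an $(\epsilon,S)$-Følner set $B\subset G$ such that for all sufficiently large $i$ there is $S_i\subset\Omega_i$ whose translates $\{bS_i:b\in B\}$ are pairwise disjoint in $\Omega_i$ and cover at least $(1-\epsilon)|\Omega_i|$ cosets. The natural route is to apply the Ornstein--Weiss Rokhlin/quasi-tiling theorem to the essentially free, measure-preserving action $G\curvearrowright(\Omega,\mu)$, obtaining a measurable $A\subset\Omega$ whose $B$-translates are disjoint and cover measure $\geq 1-\epsilon$; since $\Omega$ is the inverse limit of the $\Omega_i$, approximating $A$ by a cylinder set $\pi_i^{-1}(S_i)$ then transfers the disjointness and covering properties down to $\Omega_i$ for all large $i$. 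If a single Følner set does not suffice to quasi-tile, one allows a finite family $B_1,\ldots,B_k$ of Følner sets each with $|\partial B_j|/|B_j|<\epsilon$ and the argument below sums over $j$.

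Given such an $S_i$, for each $c\in S_i$ pick a preimage $\tau_i(c)\in G$ and extend to the tiled region equivariantly by $\tau_i(bc):=b\,\tau_i(c)$ for $b\in B$; disjointness of the tiles makes this consistent. Define $\tau_i$ arbitrarily on the remaining $\leq\epsilon|\Omega_i|$ cosets and set $F_i:=\tau_i(\Omega_i)$; it is a left transversal of $H_i$ in $G$. The key point is that for $x=b\tau_i(c)$ in the tiled part and $s\in S\cup S^{-1}$, whenever $sb\in B$ the equivariance gives $sx=(sb)\tau_i(c)=\tau_i((sb)c)\in F_i$, so $(x,sx)$ is an internal edge. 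Hence each $c\in S_i$ contributes at most $|\partial B|$ outgoing boundary edges, one per pair $(b,s)$ with $b\in B$ and $sb\notin B$, yielding $|S_i|\,|\partial B|\leq [G:H_i]\cdot|\partial B|/|B|<\epsilon[G:H_i]$ boundary edges from the tiled portion; the at most $\epsilon|\Omega_i|$ uncovered vertices contribute at most $2|S|\cdot\epsilon[G:H_i]$. Summing, $|\partial F_i|/|F_i|\leq(1+2|S|)\epsilon$, and choosing $\epsilon=\epsilon_i\to 0$ along the chain delivers the required sequence.

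The principal obstacle is the Rokhlin-type descent itself: a generic finite quotient $\Omega_i$ need not admit a useful single-set tiling by translates of $B$, and extracting one essentially requires the measure-theoretic Ornstein--Weiss quasi-tiling on $(\Omega,\mu)$ together with a profinite approximation. Essential freeness of the $G$-action on $\Omega$, i.e.\ the Farber hypothesis, is precisely what makes the measure-theoretic quasi-tiling available and its descent to the finite levels effective; the rest of the proof, including the boundary count above, is then a fairly mechanical assembly.
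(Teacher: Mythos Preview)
Your proposal is correct and follows essentially the same strategy as the paper: apply Ornstein--Weiss to the essentially free action $G\curvearrowright\Omega$, approximate the resulting base set by a cylinder to descend to $G/H_n$, assemble a transversal from translates of the F{\o}lner tile, and control its boundary by the F{\o}lner boundary plus the small uncovered remainder.

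The one substantive difference is in how the single-tile Rokhlin tower is obtained. The paper first invokes Weiss's theorem to produce a F{\o}lner \emph{monotile} $T$ of $G$; this is precisely the hypothesis of the version of Ornstein--Weiss quoted as Theorem~\ref{qt}, and it is what guarantees a measurable $A$ with $t_1A\cap t_2A=\emptyset$ and $\mu(TA)>1-\epsilon$ for a \emph{single} tile, avoiding the quasi-tiling fallback you mention. For an arbitrary F{\o}lner set $B$ that is not a monotile, such a tower with coverage $1-\epsilon$ need not exist, so your primary route does depend on an input equivalent to Weiss's result. The paper also handles the descent slightly differently: rather than arranging exact disjointness of $\{bS_i\}$ in $G/H_n$, it takes $A'=\bigcup x_jU_n$ sandwiched between a compact $K\subset A$ and an open $O\supset A$, sets $F'=\bigcup Tx_j$, bounds the overlap $y=|T|\,k-|\bar F'|$ and the deficit $z=[G:H_n]-|\bar F'|$ directly, and then removes $y$ and adds $z$ elements, absorbing $2|S|(y+z)$ into the boundary. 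Your equivariant-section variant (approximate $A$ from inside by a compact, hence eventually by a cylinder whose $B$-translates remain disjoint) also works and yields the same estimate.
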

 This Theorem was proved in \cite{Weiss} (see also \cite{AbertNikolov} for a different argument) in the case when $(H_i)$ is a normal exhausting chain and it implies the existence of a F{\o}lner sequence of monotiles in $G$. In section \ref{amenable} we give an elementary proof of Theorem \ref{reps} based on \cite{Weiss} and the Ornstein-Weiss quasi-tiling Lemma from \cite{OW}. 

The following Lemma allows us to assume that all sets $F_i$ in the above theorem are such that the induced graphs $\Gamma_{F_i}$ are connected.

\begin{lemma}
\label{connect}
Let $F$ be a set of left coset representatives for a subgroup $H$ of finite index in a group $G$ and let $\Gamma$ be a Cayley graph of $G$. There is another set $L$ of coset representatives for $H$ in $G$ such that $\Gamma_L$ is connected and $|\partial(L) |\leq |\partial (F)|$.
\end{lemma}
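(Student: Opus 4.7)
The plan is to modify $F$ by translating each of its connected components on the right by an element of $H$. The key observation is that right multiplication by any $h\in H$ is an automorphism of $\Gamma$ that preserves every left coset $gH$: an edge $(x,sx)$ maps to $(xh,sxh)$, again an edge of $\Gamma$, and $gH\cdot h=gH$. So if $C\subset F$, then $Ch$ represents the same cosets as $C$, and $|\partial_\Gamma(Ch)|=|\partial_\Gamma(C)|$. Writing $C_1,\ldots,C_r$ for the connected components of $\Gamma_F$, no edge of $\Gamma$ with both endpoints in $F$ can run between distinct $C_j$'s, so
\[
|\partial(F)|=\sum_{j=1}^r|\partial_\Gamma(C_j)|.
\]
I will argue by induction on $r$; the base case $r=1$ is immediate.

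Suppose $r\geq 2$. The main step is to exhibit two of the $C_j$'s that can be made adjacent after right-translation. Let $\overline{C_j}\subset G/H$ be the set of left cosets represented by $C_j$; the $\overline{C_j}$ partition $G/H$. The quotient of $\Gamma$ by the right $H$-action is a connected graph on $G/H$ in which $xH$ is joined to $sxH$ for each $s\in S\cup S^{-1}$. Since this quotient is connected with vertex set partitioned into $r\geq 2$ non-empty parts, some edge joins $f_jH\in\overline{C_j}$ to $f_kH\in\overline{C_k}$ with $j\neq k$, $f_j\in C_j$, $f_k\in C_k$. Lifting to $\Gamma$ yields $h_1,h_2\in H$ and $s\in S\cup S^{-1}$ with $sf_jh_1=f_kh_2$, i.e.\ an edge of $\Gamma$ from $f_jh_1\in C_jh_1$ to $f_kh_2\in C_kh_2$.

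Now set $F'=(C_jh_1)\cup(C_kh_2)\cup\bigcup_{l\neq j,k}C_l$. Each translated piece represents the same cosets as before, so $F'$ is still a set of left coset representatives and its pieces are pairwise disjoint (they represent disjoint sets of cosets). The sum of the boundary sizes of the pieces is unchanged by translation, but the new edge between $C_jh_1$ and $C_kh_2$ is internal to $F'$, giving $|\partial(F')|\leq|\partial(F)|-2$. Moreover $\Gamma_{F'}$ has at most $r-1$ connected components, since the two translated pieces are merged by the new edge and right-translation keeps every piece internally connected. Induction then produces the required $L$. The most conceptual step is showing that some pair of components can be made adjacent --- this is where the quotient graph $\Gamma/H$ and its connectivity enter --- whereas the boundary bookkeeping, once one notes that right multiplication is a coset-preserving graph automorphism, is routine.
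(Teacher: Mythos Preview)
Your proof is correct and follows essentially the same approach as the paper: decompose $F$ into connected components, use connectivity of $\Gamma$ (you phrase this via the Schreier quotient $\Gamma/H$, the paper via $\Gamma=\bigcup_j C_jH$) to find two components that become adjacent after right-translation by elements of $H$, then iterate to merge components while the boundary can only shrink. The only cosmetic difference is that the paper translates a single component (by $h_1h_2^{-1}$, in your notation) rather than two, but the bookkeeping is identical.
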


\proof The group $G$ acts on the right on its Cayley graph $\Gamma$ and we have $G=FH$. If $\Gamma_F$ is connected then take $L=F$. Otherwise let $C_1, C_2, \ldots, C_s $ be the connected components of $\Gamma_F$. Note that for $i\neq j$, the boundary of $C_i$ does not share a common edge with the boundary of $C_j$ and consequently, $|\partial(F)|= \sum_j |\partial(C_j)|$. Since $\Gamma= \cup_j C_jH$ is connected there is some edge joining a vertex $l \in C_ih$ with a vertex $w \in C_jg$ for some $g,h \in H$ and $i \neq j$.
Now consider $F'= (\cup_{j \not = i} C_j) \cup C_i'$ where $C_i'= C_ihg^{-1}$. This is another set of coset representatives for $H$ in $G$. Since $C_i'$ and $C_j$ share the edge $(lg^{-1},wg^{-1})$ on their boundary the induced graph on $C_i' \cup C_j$ is connected and the boundary of $C_i' \cup C_j$ has fewer edges than $|\partial(C_i)|+ |\partial(C_j)|$. Therefore, $|\partial(F')| < |\partial(F)|$ and we can replace $F$ with $F'$. Continuing in this fashion we reduce the number of connected components of $\Gamma_F$ until it becomes connected.
\endproof

\subsection*{Proof of Theorem \ref{main}: $n=1$} We first prove Theorem \ref{main} in the case of $n=1$. The argument, though similar to the general case, provides a more effective combinatorial alternative because it uses graphs rather than the chain complex. The reader who wants to see a homological proof of Theorem \ref{main} may proceed directly to the general case. 

Let $\langle S,R\rangle$ be a presentation of $G$ and let $k$ be the length of the longest word in $R$.

Note that for $n=1$ the group $H_1(Y_i, \mathbb Z)= H_1(H_i,\mathbb Z)=H_i^{ab}$ is the abelianization of $H_i$ and thus depends only on $H_i$. So we may use any $CW$-complex $X$ with fundamental group $G$ and its universal cover $\tilde X$ in order to describe $H_i^{ab}$ and its torsion subgroup. 

 Let $X$ be the presentation complex of $G$ and let $\tilde X$ be the universal cover of $X$. We choose a base vertex $v \in \tilde X$ so that the vertices of $\tilde X$ are in bijection with the orbit $vG$ of $v$ under the action of $G$. 
Let $U_i=vF_i$ be the subset of the vertices of $\tilde X$ which corresponds to $F_i$ and let $\Gamma_{U_i}$ be the subgraph induced on $U_i$ from $\tilde X$. By Lemma \ref{connect} we may assume that $\Gamma_{U_i}$ is connected. Note that the boundary of $\Gamma_{U_i}$ has size exactly $\partial(F_i)$. Let us take a spanning tree $T_i$ of $\Gamma_{U_i}$. Then $T_i$ has $|F_i|$ vertices  and the covering map $p: \tilde X \rightarrow \tilde X/H_i=Y_i$ defines a bijection between the vertices of $T_i$ and $Y_i$. If $\eta$ is the number of edges in the graph $\Delta_i:=p(T_i)$, then $|F_i|-1 \leq \eta$; but, as $\eta$ is no larger than the number of edges in $T_i$, $\eta \leq |F_i|-1$. This means that $\eta=|F_i|-1$ and therefore $\Delta_i$ is a spanning tree for $Y_i$. Let $y= p(v)$ be the base vertex of $Y_i$. The group $H_i$ is isomorphic to $\pi_1(Y_i)$ and therefore has presentation $\langle E  \ | D \rangle$ with the following description. For an edge $e=(a,b) \in Y_i^1$ let $l(e)$ be the closed path which travels from $y$ to $a$ on $\Delta_i$, then on the edge $(a,b)$ and returns from $b$ to $y$ on $\Delta_i$. The generating set $E$ is the set of paths $l(e)$ where $e$ is an edge of $Y_i$ outside $\Delta_i$. The relations in $D$ are given by the closed paths on the disks of $Y_i^2$. We note that the relations of $D$ have the same length as the boundary of the disks of $X^2$, i.e. the relations in $R$. 

Let  \[ E'= \{ l(p(\tilde e)) | \ \tilde e \textrm{ is an edge of } \Gamma_{U_i} \} \] and $E''= E \backslash E'$. Every edge $e$ of $Y_i^1$ is equal to $p(\tilde e)$ for  some edge $\tilde e$ of $\tilde X$ with at least one vertex in $U_i$. Moreover by definition $l(e) \in E'$ if and only if both ends of $\tilde e$ are in $U_i$. 
We see that $|E''| \leq | \partial F_i |$  which is the number of edges of $\tilde X$ with one end in $U_i$ and the other outside $U_i$. When $\tilde e$ is an edge of $\Gamma_{U_i}$ let $\tilde l( \tilde e)$ be the closed path in $\tilde X$ which travels from $v$ on $T_i$ to one end of $\tilde e$ and then returns  from  the other end of $\tilde e$ back to $v$ on the $T_i$.  Note that $l(p(\tilde e))=p(\tilde l ( \tilde e))$. Since $\tilde X$ is simply connected, the path $\tilde l ( \tilde e)$ is null-homotopic and hence so is $p(\tilde l( \tilde e))$. Therefore the generators $E'$ represent the trivial element in $H$.

We see that $H_i= \langle E \ | \ D \rangle = \langle E \ | \ D \cup E' \rangle  \simeq\langle E'' | \  \bar D \rangle$ where $\bar D$ is the image of the set $D$ under the homomorphism from the free group on $E$ to the free group on $E''$ which sends all elements of $E'$ to 1.

Observe that all the relations of $\bar D$ still have length at most $k$. By abelianizing the presentation $\langle E'' | \  \bar D \rangle$ of $H_i$ we are in a position to apply Lemma \ref{tor} and deduce that  
\[ |tor H_i^{ab}| \leq k^{|E''|}  \leq k^{|\partial(F_i)|} \]

This holds for each $i$. Moreover  $H_i^{ab}= H_1(Y_i,\mathbb Z)$,  and $|\partial(F_i)|/|G:H_i|=|\partial(F_i)|/|F_i|$ which tends to zero as $i \rightarrow \infty$. This proves the case $n=1$ of Theorem \ref{main}.

\subsection*{Proof of Theorem \ref{main}: General case} 
For the general case we argue directly with the homology groups $H_n(H_i,\mathbb Z)$.  Choose $\mathcal D$ to be a collection of simplices of $\tilde X$ which is a fundamental domain for the action of $G$ on $\tilde X$.  Let $H_i,F_i, Y_i$ and $U_i$ be as above. Let 
\[ \mathcal F_i =\bigcup_{g\in F_i} \mathcal Dg .\]
Then $\mathcal F_i$ is a fundamental domain for the action of $H_i$ on $\tilde X$. 
For a union of simplices $A \subset \tilde X$ we denote by $\bar A$ the topological closure of $A$ in $\tilde X$, this is also a union of simplices. Define
\[ \mathcal J_i= \bigcup \left \{ \mathcal Dg \ | \ \overline{Dg} \subset \mathcal F_i\right \} \]

and let  $J_i= \{ g \in G \ | \ \mathcal D g \subset \mathcal J_i\}$.

 Since $\tilde X$ is quasi-isometric to the Cayley graph $\Gamma$ of $G$ we see that
\begin{equation}  \label{eq1} \frac{| F_i \backslash J_i|}{|F_i|} \rightarrow 0 \quad \textrm{as }i \rightarrow \infty .\end{equation}
As a consequence if $m_i$ is the number of simplices of $\mathcal B_i:=\mathcal F_i \backslash \mathcal J_i$ then (\ref{eq1}) gives that $m_i/|F_i| \rightarrow 0$ with $i \rightarrow \infty$.

Recall that $C_n(Y_i)$ denotes the free abelian group with basis $Y_i^{(n)}$. 
For each simplex $ c \in Y_i^{(n)}$, let $\tilde c$ be the unique simplex of $\mathcal F_i$ such that $p(\tilde c)=c$. Note that $Y_i$ is the disjoint union of $p(\mathcal J_i)$ and $p(\mathcal B_i)$

Let $V_i, W_i$ be the subgroups of $C_n(Y_i)$ generated by the $n$-simplices of $p(\mathcal J_i)$ respectively of $p(\mathcal B_i)$. We have $C_n(Y_i)=V_i \oplus W_i$ and the relative homology group $H_n(Y_i,p(\mathcal J_i), \mathbb Z)$ is defined to be
\[H_n(Y_i,p(\mathcal J_i), \mathbb Z)=\frac{\ker \delta_n+ V_i}{\mathrm{im} \ \delta_{n+1}+V_i}\] There is a homomorphism $f: H_n(Y_i,\mathbb Z) \rightarrow H_n(Y_i,p(\mathcal J_i), \mathbb Z)$ given by \[ f(x + \mathrm{im}\ \delta_{n+1})= x+ \mathrm{im} \ \delta_{n+1}+V_i \] for each $x \in \ker \delta_n$.

The covering map $p: \tilde X \rightarrow Y_i$ defines a homomorphism $p_n : H_n(\tilde X, \mathbb Z) \rightarrow H_n(Y_i, \mathbb Z)$ which sends a cycle $\sum_j  \lambda_j \tilde c_j$ with $\lambda_j \in \mathbb N, \tilde c_j \in C_n(\tilde X)$  to the cycle $\sum_j \lambda_j p(\tilde c_j) \in C_n(X)$. Let $H_n(\tilde X)_{\mathcal J_i}$ be the subgroup of $H_n(\tilde X, \mathbb Z)$ generated by the images of cycles with support in $\mathcal J_i$.

\begin{lemma}\label{exact}  The following sequence of abelian groups is exact at $H_n(Y_i, \mathbb Z)$.
\[ H_n(\tilde X)_{\mathcal J_i}\stackrel{p_n}{\longrightarrow} H_n(Y_i, \mathbb Z) \stackrel{f}{\longrightarrow} H_n(Y_i,p(\mathcal J_i),\mathbb Z).\]
\end{lemma}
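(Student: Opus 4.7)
The plan is to verify the two containments that make up exactness at $H_n(Y_i,\mathbb Z)$: first $\mathrm{im}\,p_n\subseteq \ker f$, and then $\ker f\subseteq \mathrm{im}\,p_n$.

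The first containment is the easy one. If $\tilde z=\sum_j\lambda_j\tilde c_j$ is a cycle in $C_n(\tilde X)$ with support in $\mathcal J_i$, then $p_n([\tilde z])=[\sum_j\lambda_j p(\tilde c_j)]$ is represented by a chain whose support lies in $p(\mathcal J_i)$, so it is an element of $V_i$. Applying $f$ puts it into the coset $V_i+\mathrm{im}\,\delta_{n+1}$, which is the zero element of $H_n(Y_i,p(\mathcal J_i),\mathbb Z)$. Hence $f\circ p_n=0$.

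For the reverse inclusion, suppose $[x]\in H_n(Y_i,\mathbb Z)$ with $f([x])=0$. Then $x=v+\delta_{n+1}(y)$ for some $v\in V_i$ and some $(n+1)$-chain $y$ on $Y_i$, and replacing $x$ by the homologous cycle $v$ we may assume $x=v$. So $v$ is a cycle whose support lies in $p(\mathcal J_i)$. Since $\mathcal F_i$ is a strict fundamental domain for $H_i$, the projection $p$ restricts to a bijection between the $n$-simplices of $\mathcal F_i$ and those of $Y_i$. Write $v=\sum_j\lambda_j c_j$ with $c_j\in p(\mathcal J_i)$ and lift to $\tilde v=\sum_j\lambda_j\tilde c_j\in C_n(\tilde X)$ where $\tilde c_j$ is the unique preimage in $\mathcal J_i$.

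The main technical point, and the place where the definition of $\mathcal J_i$ is used in an essential way, is to check that $\tilde v$ is actually a cycle. Because each $\tilde c_j$ lies in $\mathcal J_i$, its closure (and hence every face of $\tilde c_j$) is contained in $\mathcal F_i$. Thus $\delta_n\tilde v$ is a chain supported on simplices of $\mathcal F_i$. On the other hand $p(\delta_n\tilde v)=\delta_n(p(\tilde v))=\delta_n v=0$, and the bijection $p|_{\mathcal F_i}$ between $(n-1)$-simplices of $\mathcal F_i$ and of $Y_i$ forces $\delta_n\tilde v=0$. Hence $\tilde v$ is a cycle in $\tilde X$ with support in $\mathcal J_i$, so $[\tilde v]\in H_n(\tilde X)_{\mathcal J_i}$, and by construction $p_n([\tilde v])=[v]=[x]$. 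This establishes $\ker f\subseteq\mathrm{im}\,p_n$ and completes the proof.

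The main obstacle is the bookkeeping around two subtly different roles played by $\mathcal F_i$ and $\mathcal J_i$: $\mathcal F_i$ is a set-theoretic fundamental domain (so $p$ is bijective on its simplices) but is not a subcomplex, while $\mathcal J_i$ is the ``interior'' part whose simplices have all their faces in $\mathcal F_i$. It is precisely this combination that lets the boundary of a lifted chain from $\mathcal J_i$ sit inside $\mathcal F_i$, where injectivity of $p$ can then be invoked.
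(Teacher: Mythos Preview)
Your proof is correct and follows essentially the same argument as the paper: lift a cycle supported in $p(\mathcal J_i)$ to $\mathcal J_i$, use the definition of $\mathcal J_i$ to ensure the boundary of the lift lies in $\mathcal F_i$, and then invoke the bijectivity of $p$ on simplices of the fundamental domain to conclude the lift is a cycle. The only difference is that you also spell out the easy containment $\mathrm{im}\,p_n\subseteq\ker f$, which the paper leaves implicit.
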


\begin{proof} Let us verify the exactness at $H_n(Y_i, \mathbb Z)$. Suppose $x + \mathrm{im} \ \delta_{n+1} \in \ker f$. Then $x=y+v$ where $v \in V_i$ and $y \in \mathrm{im} \ \delta_{n+1}$. Hence $v=x-y \in \ker \delta_n \cap V_i$ is a cycle of $C_n(Y)$ supported on the $n$-simplices of $p(\mathcal J_i)$.  It is sufficient to prove that $v =p_n(\tilde v)$ for some cycle $\tilde  v$ in $C_n(\mathcal J_i)$. Suppose that $v = \sum_{j} \lambda_j c_j$ where $\lambda_j \in \mathbb Z$ and $c_j \in p(\mathcal J_i) \subset Y_i$. Then $\tilde v := \sum_j \lambda_j \tilde c_j \in C_n(\mathcal J_i) < C_n(\tilde X)$. From the definition of $\mathcal J_i$,  the boundary of each $\tilde c_j$ is inside $\mathcal F_i$  and so $\delta_n (\tilde v) \in C_{n-1}(\mathcal F_i)$. Since $v$ is a cycle of $C_n(Y_i)$ we have
\[ p(\delta_n(\tilde v))= \delta_n(p(\tilde v))= \delta_n(v)=0 .\] On the other hand, $\mathcal F_i$ is a fundamental domain for $H_i$ and from $\delta_n(\tilde v)  \in C_{n-1}(\mathcal F_i)$ and $p(\delta_n(\tilde v))=0$ we obtain
 $\delta_n(\tilde v)=0$ i.e. $\tilde v$ is a cycle in $C_n(\mathcal J_i)$, hence $v=p_n(\tilde v) \in \mathrm{im}\ p_n$ and the sequence is exact at $H_n(Y_i, \mathbb Z)$.

\end{proof}
\medskip

By assumption, $H_n(\tilde X, \mathbb Z)=0$, so Lemma \ref{exact} implies that 
$H_n(Y_i, \mathbb Z)$, which is isomorphic to $H_n(Y_i,p(\mathcal J_i),\mathbb Z)$, is a subgroup of $\frac{C_n(Y_i)}{ V_i+ \mathrm{im} \delta_{n+1}}$. In turn by considering the projection $h: C_n(Y_i)= W_i \oplus V_i \rightarrow W_i$ we see
\[ \frac{C_n(Y_i)}{ V_i+ \mathrm{im} \delta_{n+1}} \simeq W_i/W_{i,0} \]
where $W_{i,0}=h(\mathrm{im} \delta_{n+1})$. 
The group $W_i$ is a free abelian group generated  by the $n$-simplices  in $p(\mathcal B_i)$ and in particular has a basis of size at most $m_i$.

Note that $\mathrm{im} \ \delta_{n+1}$ is generated by  boundaries of simplices in $C_{n+1}(Y)$ and each such boundary is  an integral combination of $n+2$ simplices of dimension $n$. Since $h$ is a projection onto a direct summand spanned by a subset of the basis $Y_i^{(n)}$ we conclude that $W_{i,0}=h(\mathrm{im} \ \delta_{n+1})$ is spanned by vectors of $l_1$-norm at most $n+2$.

Lemma \ref{tor} applies and gives that $|tor W_i/W_{i,0}| \leq (n+2)^{|m_i|}$. Since $H_n(Y_i, \mathbb Z)$ is isomorphic to a subgroup of  $W_i/W_{i,0}$  we obtain \[ tor (H_n(Y_i, \mathbb Z)) \leq tor (W_i/W_{i,0}) \] and so $\log |tor (H_n(Y_i, \mathbb Z)| \leq |m_i| \log (n+2)$.
Together with the earlier observation that $|m_i|/[G:H_i] \rightarrow 0$ with $i \rightarrow \infty$ this completes the proof of Theorem \ref{main}. 

\section{Quasi-tilings of amenable groups} \label{amenable}

In this section we prove Theorem \ref{reps}.

A \emph{monotile} for a group $G$ is a finite set $T$ such that there exists a subset  $C \subset G$ such that
$\cup_{c \in C}{Tc}$ is a partition of $G$. The relevance of monotiles to our proof is provided by the following 

\begin{theorem}[\cite{OW}] \label{qt}
Let $G$ be an amenable group with a p.m.p essentially free action on a probability space $(\Omega, \mu)$. Let $T \subset G$ be a monotile and let $\epsilon >0$. Then there exist a measurable subset $A \subset \Omega$ such that

1. $t_1 A \cap t_2 A= \emptyset$ for all $t_1 \not = t_2 \in T$, and

2. $\mu ( \cup_{t \in T} tA) > 1- \epsilon$.
\end{theorem}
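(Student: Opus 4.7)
The strategy is to deduce the statement from the Rokhlin-type consequence of the Ornstein--Weiss quasi-tiling theorem for essentially free p.m.p.\ actions of amenable groups, and to use the exact tiling $G=\bigsqcup_{c\in C}Tc$ to convert approximate disjointness into exact disjointness. Without loss of generality $1\in T$: otherwise pick any $t_0\in T$ and replace $T$ by $Tt_0^{-1}$, which is again a monotile with tiling set $t_0C$; a solution $A'$ for $Tt_0^{-1}$ then yields the solution $A=t_0^{-1}A'$ for $T$ since the $G$-action is measure-preserving.

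Fix $\delta=\epsilon/3$. By the standard Rokhlin-tower consequence of \cite{OW}, applied with invariance parameter chosen so that generic Følner sets also contain large $T$-tiled subsets, there exist Følner sets $F_1,\dots,F_k\subset G$ and measurable sets $B_1,\dots,B_k\subset\Omega$ such that the translates $\{fB_i:f\in F_i,\ 1\le i\le k\}$ are pairwise disjoint in $\Omega$ and of total measure greater than $1-\delta$. Set $C_i':=\{c\in C:Tc\subset F_i\}$ and $F_i':=\bigsqcup_{c\in C_i'}Tc\subset F_i$. The complement $F_i\setminus F_i'$ is contained in a fixed $T$-thickening of $\partial F_i$, so the Følner property lets us arrange $|F_i\setminus F_i'|<\delta|F_i|$ for all $i$. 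Define
\[ A \;:=\; \bigsqcup_{i=1}^{k}\bigsqcup_{c\in C_i'}cB_i. \]
Because $1\in T$ each $c\in C_i'$ lies in $F_i'\subset F_i$, so the sets $cB_i$ form a sub-family of the pairwise disjoint Rokhlin family, making $A$ a well-defined disjoint union. For $t_1\ne t_2\in T$ and any $c_j\in C_{i_j}'$, the elements $t_1c_1\in F_{i_1}$ and $t_2c_2\in F_{i_2}$ cannot index the same tower piece: if $i_1\ne i_2$ they lie in different pairwise disjoint towers; if $i_1=i_2$, the unique factorisation in $F_i'=\bigsqcup_c Tc$ would force $(t_1,c_1)=(t_2,c_2)$, contradicting $t_1\ne t_2$. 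Hence $t_1c_1B_{i_1}\cap t_2c_2B_{i_2}=\emptyset$, giving (1). For (2),
\[ \bigcup_{t\in T}tA \;=\; \bigsqcup_{i,\,t\in T,\,c\in C_i'}tcB_i \;=\; \bigsqcup_{i,\,f\in F_i'}fB_i, \]
and discarding the tower pieces indexed by $F_i\setminus F_i'$ costs measure at most $\sum_i|F_i\setminus F_i'|\mu(B_i)\le\delta\sum_i|F_i|\mu(B_i)\le\delta$, so the total exceeds $(1-\delta)-\delta=1-2\delta>1-\epsilon$.

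The main obstacle is the input from \cite{OW}: one needs the Rokhlin-type lemma producing finitely many Følner sets $F_i$ with measurable bases $B_i$ whose translates $fB_i$ are pairwise disjoint and jointly of almost full measure. Its proof requires the full combinatorial quasi-tiling machinery of Ornstein and Weiss, passed through a standard measure-theoretic transfer argument. Once this input is available, the monotile structure collapses the multi-Følner Rokhlin tower into a single tower indexed by $T$ via the elementary bookkeeping above.
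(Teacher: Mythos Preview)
The paper does not prove this theorem at all: Theorem~\ref{qt} is quoted verbatim from \cite{OW} and used as a black box in the proof of Theorem~\ref{reps}. So there is no ``paper's own proof'' to compare against; any argument you supply is additional content.

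Your reduction is a correct and standard way to extract the single-tile Rokhlin lemma from the general Ornstein--Weiss theory. The normalisation $1\in T$ is harmless. The key steps---intersecting each F{\o}lner shape $F_i$ with the union of full $T$-blocks $Tc\subset F_i$ it contains, and then setting $A=\bigsqcup_{i}\bigsqcup_{c\in C_i'}cB_i$---are sound: the unique factorisation $G=\bigsqcup_{c\in C}Tc$ guarantees that $t_1c_1=t_2c_2$ forces $(t_1,c_1)=(t_2,c_2)$, which is exactly what you need for~(1); and the measure estimate for~(2) is correct once one notes $F_i\setminus F_i'\subset (TT^{-1})\cdot(G\setminus F_i)\cap F_i$, so choosing the $F_i$ to be $(\delta,TT^{-1})$-invariant suffices. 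You are right that the genuine work is the Rokhlin-type input from \cite{OW} giving finitely many F{\o}lner shapes $F_i$ with bases $B_i$ whose translates $\{fB_i:f\in F_i\}$ are \emph{exactly} pairwise disjoint and of measure $>1-\delta$; this is the content of the Ornstein--Weiss tiling theorem for essentially free actions (not merely the $\epsilon$-disjoint quasi-tiling version), and once granted, your bookkeeping finishes the job.
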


Of course this is only useful if one has a good source of monotiles. It is not known if every amenable group has a F{\o}lner sequence of monotiles. When $G$ is residually finite \cite{Weiss} B. Weiss constructed such a sequence of monotiles $T_i$ with  $|\partial (T_i)| |T_i|^{-1} \rightarrow 0$. In fact the sets $T_i$ are chosen to be coset representatives for a normal exhausting chain in $G$. To obtain F{\o}lner representatives for Farber chains we need to use Theorem \ref{qt}. 

So let $H_i$ be a Farber chain in $G$ and let $\Omega = \underleftarrow{\lim} \ G/H_i$ with probability measure $\mu$ induced from the counting measure on each $G/H_i$. By $U_i$ we denote the closure of $H_i$ in $\Omega$, this is an open set of measure $[G:H_i]^{-1}$. The base of the topology in $\Omega$ is given by all $\mathcal  B=\{ gU_i \ | \ g \in G, i \in \mathbb N\}$.

\begin{proof}[Proof of Theorem \ref{reps}.]
For $\delta >0$ let $T$ be a $(\delta,S)$-invariant monotile of size $m$ provided by \cite{Weiss}.

We are going to prove that there is an integer $N$ such that for all $n>N$ there is a $(2\delta,S)$-invariant set $F \subset G$ which maps bijectively onto $G/N_n$, i.e. forms a set of left coset representatives for $H_n$ in $G$.

Let $\epsilon >0$ and let $A \subset \Omega$ be the subset provided in Theorem \ref{qt} for this $T$, $\Omega$ and $\epsilon$. We can find a compact set $K \subset G $ and an open $O \subset G$ such that $K \subset A \subset O$ and $\mu(O \backslash A)< \epsilon$ and $\mu(A \backslash K)< \epsilon$. Let $\mathcal B_O$ be the elements of $\mathcal B$ contained in $O$. Since $K \subset O$ we see that $\mathcal B_O$ is an open cover of the compact set $K$. Hence there exists a finite subcover of $K$ which implies that there is an integer $N$ (depending on $O$ and $K$) such that for all $n>N$ there exist $x_1, \ldots x_k \in G$ (depending on $n$), such that \[ K \subset \cup_{i=1}^k x_iU_n \subset O. \] We may assume that $x_iU_n \not = x_jU_n$  if $i \not = j$.

 Let 
$A'=\cup_{i=1}^k x_iU_n$ and let $a = \mu(A')$. Note that since 
\[ \mu(\cup_{t \in T} tK)> 1-\epsilon -m\epsilon  \] and $tA'$ still covers $tK$ for each $t \in T$ we have
\[ a=\mu( \cup_{t,i} tx_i U_n) > 1-(m+1) \epsilon. \]

Also since $\mu(O) \leq 1/m + \epsilon$ we get $k[G:H_n]^{-1}=\mu ( A') \leq 1/m + \epsilon$ and therefore 
$k \leq [G:H_n](1/m +\epsilon)$.
  
Let $F'= \cup_{i=1}^k Tx_i$. Note that \[ |\partial F'| \leq k |\partial T| \leq \delta |T| [G:H_n](1/m +\epsilon)=\delta [G:H_n](1+m \epsilon). \] On the other hand the image $\bar F'$ of $F'$ in $G/H_n$ has size 
\[| \bar F'|= [G:H_n]a >[G:H_n](1-(m+1) \epsilon).\]


Let $y:=|T| k - | \bar F'|$, we have 
\[ y< [G:H_n] (1+m\epsilon)- [G:H_n](1-(m+1)\epsilon)=[G:H_n] (2m+1) \epsilon. \] 

Let \[ z:= [G:H_n] -|\bar F'|< [G:H_n] (m+1) \epsilon. \] We can remove $y$ elements from $F'$ and add $z$ elements to it so that the resulting set $F$ maps bijectively onto $G/H_n$. This means that $F$ is a set of left coset representatives for $H_n$ in $G$. The boundary of $F$ can increase by at most $2|S|(y+z)< 2|S|(3m+2)[G:H_n]\epsilon$.

Now $|\partial F|$ is at most

\[ |\partial F'|+ 2|S|(3m+2)[G:H_n] \epsilon < \delta [G:H_n](1+\epsilon (m+2|S|(3m+2) ).\] Let $\epsilon$  be chosen such that $\epsilon < (m+2|S|(3m+2))^{-1}$. We obtain $|\partial F|< 2 \delta [G:H_n]=2\delta |F|$. We constructed such a set $F$ for each $n>N$. The Theorem follows. \end{proof}
\medskip

\section{Proof of Theorem \ref{grow}} \label{solv}

We shall adopt the convention that for a commutative ring $S$ and group $G$ or $H$ or $K$, the augmentation ideal of the group algebra is denoted by the corresponding German fraktur letter $\mathfrak g$ or $\mathfrak h$ or $\mathfrak k$.

We shall construct our example $G$ as a split extension (semidirect product) of an abelian normal subgroup $W$ by a group $Q$. 
We write $W$ additively, regarding it as a $\Z Q$-module.
Thus $G=Q\ltimes W$ consists of ordered pairs $Q\times W$ with product $(q,w)(q',w')=(qq',wq+w')$.

The construction that follows is possible when $Q$ fails to have the ascending chain condition on subgroups that are closed in the profinite topology, and since this happens for many metabelian groups we are able to construct a $G$ that is soluble of derived length $3$.

\begin{lemma}\label{failsascendingcondition}
Let $Q$ be a residually finite group which possesses an infinite locally finite subgroup. Then $Q$ fails to have the ascending chain condition on subgroups that are profinitely closed. In particular, the lamplighter groups $C_p\wr\Z$ ($p$ prime) provide examples of this behaviour.
\end{lemma}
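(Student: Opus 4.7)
The plan is to exploit the well-known fact that in a residually finite group, every \emph{finite} subgroup is closed in the profinite topology, and then produce an infinite strictly ascending chain of finite subgroups inside the given infinite locally finite subgroup.

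First I would establish the key observation: if $Q$ is residually finite and $F\le Q$ is finite, then $F$ is profinitely closed. Given $g\in Q\setminus F$, for each $h\in F$ the element $h^{-1}g$ is non-trivial, so by residual finiteness there is a normal subgroup $N_h\triangleleft Q$ of finite index with $h^{-1}g\notin N_h$. Set $N=\bigcap_{h\in F}N_h$; this has finite index because $F$ is finite. Then $K:=FN$ is a finite-index subgroup containing $F$, and $g\notin K$, for if $g=hn$ with $h\in F$, $n\in N$, then $h^{-1}g=n\in N\subseteq N_h$, contradicting the choice of $N_h$. Hence $F=\bigcap\{K\le Q\mid F\le K,\ [Q:K]<\infty\}$, i.e.\ $F$ is profinitely closed.

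Next I would extract the chain. Let $L\le Q$ be an infinite locally finite subgroup. Pick distinct elements $x_1,x_2,\dots\in L$ and set $F_n:=\langle x_1,\dots,x_n\rangle$; each $F_n$ is finite by local finiteness, and $\bigcup_n F_n$ is infinite, so by passing to a subsequence we obtain an infinite strictly ascending chain $F_{n_1}<F_{n_2}<\cdots$ of finite subgroups of $Q$. By the first step every $F_{n_j}$ is profinitely closed, so $Q$ fails the ascending chain condition on profinitely closed subgroups.

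Finally, for the particular examples, I would note that the lamplighter group $C_p\wr\Z=\bigl(\bigoplus_{n\in\Z}C_p\bigr)\rtimes\Z$ is finitely generated and metabelian, hence residually finite by P.\,Hall's theorem on finitely generated abelian-by-polycyclic groups. Its base group $\bigoplus_{n\in\Z}C_p$ is an infinite elementary abelian $p$-group, which is visibly locally finite, so the general statement applies and the lamplighter groups serve as the advertised examples. There is no real obstacle here; the only point worth being careful about is the proof that finite subgroups of residually finite groups are profinitely closed, which is the small lemma executed above.
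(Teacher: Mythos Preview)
Your proof is correct and follows exactly the same strategy as the paper: finite subgroups of a residually finite group are profinitely closed, and an infinite locally finite subgroup contains a strictly ascending chain of finite subgroups. The paper's proof is a two-sentence sketch of precisely these points, while you have supplied the routine details (including the justification for the lamplighter example), so there is no substantive difference in approach.
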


\begin{proof}
In a residually finite group, the finite subgroups are closed, and in a group with an infinite locally finite normal subgroup there are strictly ascending chains of finite subgroups.
\end{proof}

There are many more residually finite groups for which this ascending chain condition fails including all branch groups  (see Propositions \ref{branch} below), and all finitely generated residually finite elementary amenable groups (Theorem \ref{sol}).

If $R$ is a subgroup of $Q$ and $T$ is a submodule of $W$ then $R\ltimes T$ is a subgroup of $G$. In this notation we have the following, the proof of which we leave to the reader.

\begin{lemma}\label{abelianization}
Let $R$ be a normal subgroup of $Q$. Then the derived subgroup of $R\ltimes W$ is $[R,R]\ltimes W\mathfrak r$, and the derived factor group $H_1(R\ltimes W,\Z)$ is isomorphic to the direct product $R/[R,R]\times W/W\mathfrak r$.
\end{lemma}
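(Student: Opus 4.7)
The plan is to verify both assertions by direct computation in the semidirect product, using only the multiplication rule for $R\ltimes W$ together with the observation that $W\mathfrak{r}$ is closed under right multiplication by all of $\Z R$ (because $\mathfrak{r}$ is a two-sided ideal of $\Z R$).

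For the first assertion, I would expand a general commutator $[(r,w),(r',w')]$. Its $R$-coordinate is $[r,r']$, while its $W$-coordinate comes out as a $\Z R$-linear combination of terms of the shape $w''(s-1)$ with $s\in R$ and $w''\in\{w,w'\}$, and so lies in $W\mathfrak{r}$. This gives the inclusion $[R\ltimes W,R\ltimes W]\subseteq [R,R]\ltimes W\mathfrak{r}$. For the reverse containment, note that $[(r,0),(r',0)]=([r,r'],0)$ realises every element of $[R,R]$, while commutators of the form $[(1,w),(r,0)]$ have trivial $R$-part and $W$-part of the shape $w(1-r^{-1})$, and these additively generate $W\mathfrak{r}$ as $w\in W$ and $r\in R$ vary.

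For the second assertion, the natural candidate is the map $\phi\colon R\ltimes W\to R/[R,R]\times W/W\mathfrak{r}$ defined by $(r,w)\mapsto(r[R,R],\,w+W\mathfrak{r})$. The main verification is that $\phi$ is a homomorphism despite the twist in the semidirect product: this works because $wr-w=w(r-1)\in W\mathfrak{r}$, so the action of $R$ on $W$ becomes trivial after reduction modulo $W\mathfrak{r}$. Surjectivity is immediate, the kernel is plainly $[R,R]\ltimes W\mathfrak{r}$, and by the first assertion this coincides with the commutator subgroup; the first isomorphism theorem then identifies $H_1(R\ltimes W,\Z)$ with $R/[R,R]\times W/W\mathfrak{r}$. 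There is no genuine obstacle — the whole argument is bookkeeping with the semidirect product formula — but the one point worth flagging is the stability of $W\mathfrak{r}$ under the full right action of $\Z R$, which is what lets the $W$-coordinate of an arbitrary commutator be repackaged cleanly into $W\mathfrak{r}$.
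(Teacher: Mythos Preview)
Your proposal is correct. The paper itself omits the proof entirely (``the proof of which we leave to the reader''), and your direct computation in the semidirect product is exactly the routine verification that is intended; there is nothing to compare against. One cosmetic point: with the multiplication rule $(q,w)(q',w')=(qq',wq'+w')$ and the convention $[a,b]=a^{-1}b^{-1}ab$, the commutator $[(1,w),(r,0)]$ actually has $W$-part $w(r-1)$ rather than $w(1-r^{-1})$, but since both families additively generate $W\mathfrak{r}$ as $w\in W$ and $r\in R$ vary, this has no effect on the argument.
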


Let $Q$ be a finitely generated residually finite group which possesses a strictly ascending sequence $1=P_0<P_1<P_2<\dots$ of subgroups each of which is closed in the profinite topology. Set $P=\bigcup_iP_i$. 
It is possibly worth noting that we do not require $P$ to be closed.

The construction of the module $W$ depends on a choice of sequence $(n_i)$ of natural numbers $\ge2$. 
We shall describe a general construction first which produces a module for any sequence and only towards the end of the argument will we choose a particular sequence to obtain the required fast growth rate. 
Let $(n_i)$ be a sequence 
 and let $V_i$ be the $\Z P$-module $\Z/n_i\Z[P_i\backslash P]$ and let $V_i'$ be the kernel of the augmentation map
$V_i\to\Z/n_i\Z$. Then $V_i$ is the permutation module on the cosets of $P_i$ in $P$ having exponent $n_i$, and we write $e_i$ for the generator $P_i$. Let $\overline V$ be the product $\prod_iV_i$ and let  $U$ be the $\Z P$-submodule of $\overline V$ generated by
$e:=(e_0,e_1,e_2,\dots)$. Let $V$ be the direct sum $\bigoplus_iV_i$. 
The module $W$ is now defined to be the induced module $U\otimes_{\Z P}\Z Q$ and the role of $V$ is explained in Lemma \ref{modulestructure} below.

We have the following properties of the $\Z P$-modules $V$, $\overline V$ and $U$. 

\begin{lemma}\label{modulestructure} 
Assume that the sequence $(n_i)$ is unbounded.
\begin{enumerate}
\item
$U\cap V$ is the submodule of $U$ consisting of the $\Z$-torsion and $U/U\cap V$ is a trivial cyclic module isomorphic to $\Z$.
\item
 $V\cap U$ is the direct sum $\bigoplus_iV_i'$.
\end{enumerate}
\end{lemma}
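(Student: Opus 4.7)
My plan is to exploit the ``stabilization'' of $e=(e_i)$ along the chain $(P_i)$: for any $\alpha\in\mathbb{Z}P$ with finite support there exists $N$ with $\mathrm{supp}(\alpha)\subseteq P_N$, and then $e_ig=P_ig=P_i=e_i$ for every $g\in\mathrm{supp}(\alpha)$ and $i\geq N$, so $e_i\alpha=\varepsilon(\alpha)e_i$ for all $i\geq N$, where $\varepsilon\colon\mathbb{Z}P\to\mathbb{Z}$ is the standard augmentation.

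For part~(1), I would define $\phi\colon U\to\mathbb{Z}$ by $\phi(e\alpha)=\varepsilon(\alpha)$. Well-definedness is where the unboundedness of $(n_i)$ enters: if $e\alpha=e\alpha'$, then for sufficiently large $i$ we have $\varepsilon(\alpha-\alpha')e_i=0$ in $V_i$, forcing $n_i\mid\varepsilon(\alpha-\alpha')$, and unboundedness then gives $\varepsilon(\alpha)=\varepsilon(\alpha')$. The map $\phi$ is a surjective $\mathbb{Z}P$-linear map with trivial target. Its kernel equals $U\cap V$: if $\alpha\in\mathfrak{p}$ then $e_i\alpha=0$ for $i\geq N$, whence $e\alpha\in V$, and conversely $e\alpha\in V$ forces $\varepsilon(\alpha)e_i=0$ eventually, hence $\varepsilon(\alpha)=0$. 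Since each $V_i$ is $n_i$-torsion, $U\cap V$ is $\mathbb{Z}$-torsion; and any torsion element of $U$ must map to zero in the torsion-free quotient $U/(U\cap V)\cong\mathbb{Z}$, hence lies in $\ker\phi=U\cap V$. This proves part~(1).

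For part~(2), the inclusion $U\cap V\subseteq\bigoplus_iV_i'$ is immediate: $U\cap V=e\mathfrak{p}$ and each component $e_i\alpha$ has augmentation $\varepsilon(\alpha)\equiv 0\pmod{n_i}$ for $\alpha\in\mathfrak{p}$, so $e_i\alpha\in V_i'$. For the reverse inclusion, I would show by induction on $k$ that every element of $V_k'$ sitting in position $k$ (with zeros in all other positions) lies in $U$, using the ``diagonal'' elements $e(g-1)=(P_ig-P_i)_i$ for $g\in P_{k+1}$, which belong to $U\cap V$ with support in $\{0,1,\ldots,k\}$ and top component $P_kg-P_k\in V_k'$. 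By subtracting suitable $\mathbb{Z}P$-combinations of lower-depth elements supplied by the inductive hypothesis, one isolates generators of $V_k'$ at position $k$ alone. The main obstacle is completing this induction rigorously: verifying that the telescoping really exhausts $V_k'$ at every depth and that the corrections at lower depths can always be arranged to cancel exactly, which depends delicately on the interaction between the ascending chain $(P_i)$ and the moduli $(n_i)$.
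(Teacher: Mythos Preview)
Your argument for part~(1) is correct and is essentially the paper's: both identify $U\cap V$ with $U\mathfrak p=e\mathfrak p$ by noting that $e(g-1)$ has only finitely many nonzero coordinates, and then invoke unboundedness of $(n_i)$ to conclude $U/e\mathfrak p\cong\Z$. For part~(2), your inductive plan also coincides with the paper's, which argues by induction on $j$ that $e\mathfrak p_j\Z P=V_0'\oplus\cdots\oplus V_{j-1}'$; this is exactly your idea of isolating $V_k'$ at position $k$ from the elements $e(g-1)$ with $g\in P_{k+1}$ after stripping off the lower components furnished by the inductive hypothesis.

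Your hesitation about the inductive step is well placed, and the obstacle you flag is genuine: part~(2) is in fact \emph{false} under the sole hypothesis that $(n_i)$ is unbounded, so neither your induction nor the paper's can be completed as written. Take $P_0=1$ and suppose $d:=\gcd(n_0,n_1)>1$. If $(0,v_1,0,\dots)=e\beta\in U$, then $e_0\beta=0$ in $V_0=\Z/n_0\Z[P]$ forces $\beta\in n_0\Z P$, whence $v_1=e_1\beta\in n_0V_1=dV_1$; but for $g\in P_2\setminus P_1$ the element $P_1g-P_1\in V_1'$ has coefficients $\pm1\notin d\cdot\Z/n_1\Z$, so $V_1'\not\subseteq U\cap V$. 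What does go through is an extra hypothesis such as pairwise coprimality of the $n_i$: given $v_k\in V_k'$, lift it to some $\beta_0\in\mathfrak p$ supported in $P_M$ with $e_k\beta_0=v_k$, choose by the Chinese Remainder Theorem an integer $m\equiv1\pmod{n_k}$ and $m\equiv0\pmod{n_i}$ for all $i<M$ with $i\ne k$, and observe that $e(m\beta_0)=(0,\dots,0,v_k,0,\dots)\in U$. Since in the subsequent construction the $n_i$ are chosen freely, one can simply take them pairwise coprime and the difficulty disappears; but as stated the ``delicate interaction'' you worried about is fatal.
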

\begin{proof}
Let $p$ be an arbitrary element of $P$. Then there exists $j$ such  that $p\in P_j$. Therefore $e_ip=e_i$ for all $i\ge j$. Hence $e(p-1)$ has only finitely many non-zero coordinates and so lies in $V$. Thus $U\mathfrak p\subseteq V$ and $U/U\mathfrak p$ is a cyclic trivial module which is isomorphic to $\Z$ because the $n_i$ are unbounded. 
It is also clear now that $U\cap V=U\mathfrak p$ is precisely the torsion submodule of $U$.
This proves (1). Since $U\mathfrak p=U\cap V$, we see that the projection of $U\cap V$ onto any $V_i$ has image contained in $V_i'$.

We now show by induction on $j$ that $e\mathfrak p_j\Z P=V_0'\oplus\dots\oplus V_{j-1}'$ and then (2) follows. First, the $\Z P_j$ module $e\mathfrak p_j$ is contained in $V_0\oplus\dots\oplus V_{j-1}$ because $P_j$ fixes $e_k$ for all $k\ge j$. Inductively we may assume that $e\mathfrak p_{j-1}\Z P=V_0'\oplus\dots\oplus V_{j-2}'$ from which it follows that
$e\mathfrak p_{j}\Z P$ contains $V_0'\oplus\dots\oplus V_{j-2}'$ as a submodule. Since $e\mathfrak p_j$ also maps surjectively to $\Z/n_j\Z$ on projection to the $j$-th factor the result follows.
\end{proof}

Let $Y_i= V_i \otimes_{\mathbb{Z}P} \mathbb{Z}Q$ and $Y_i ' = V_i' \otimes_{\mathbb{Z}P} \mathbb{Z}Q$; set $Y = \oplus_i Y_i$ and $\bar Y= \prod_i Y_i$. Then $W $ is a $\Z [Q]$ submodule of $\bar Y$. Note that by Lemma \ref{modulestructure}, $W \cap Y= \oplus_i Y_i'$. The following Lemma identifies torsion in $\mathbb{Z}[Q]$-submodules of $W$ that contain $Y_i'$.

\begin{lemma}\label{derivedsubgp} Fix $i\ge0$. 
If $R$ is a normal subgroup of finite index in $Q$ such that $P_iR<PR$ and if $X$ is a $\Z Q$-submodule of $W$ which contains $Y_i'$, then $tor(X/X\mathfrak r) \geq n_i$.
\end{lemma}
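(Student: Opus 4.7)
The plan is to construct a $\mathbb{Z}Q$-homomorphism from $X/X\mathfrak{r}$ to a free $\mathbb{Z}/n_i\mathbb{Z}$-module on double cosets, and then exhibit, using the hypothesis $P_iR<PR$, a class whose image there has order exactly $n_i$ while being torsion in the source.

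First I would build the projection $\pi_i\colon W\to Y_i$. The coordinate map $\overline V\to V_i$ is a $\mathbb{Z}P$-homomorphism; restricting to $U\to V_i$ and applying $-\otimes_{\mathbb{Z}P}\mathbb{Z}Q$ (an exact functor, since $\mathbb{Z}Q$ is $\mathbb{Z}P$-free) yields a $\mathbb{Z}Q$-homomorphism $\pi_i\colon W\to Y_i$. Lemma \ref{modulestructure}(2) gives $V_i'\subseteq U$, hence $Y_i'\subseteq W$, and the coordinate projection is the identity on $V_i'$; hence $\pi_i$ restricts to the natural inclusion $Y_i'\hookrightarrow Y_i$. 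Since $\pi_i$ is $\mathbb{Z}Q$-linear it sends $X\mathfrak{r}$ into $Y_i\mathfrak{r}$ and descends to $\bar\pi_i\colon X/X\mathfrak{r}\to Y_i/Y_i\mathfrak{r}$. I would then identify $Y_i\cong(\mathbb{Z}/n_i\mathbb{Z})[P_i\backslash Q]$, so that $Y_i/Y_i\mathfrak{r}\cong(\mathbb{Z}/n_i\mathbb{Z})[P_i\backslash Q/R]$ is the free $\mathbb{Z}/n_i\mathbb{Z}$-module on the set of $(P_i,R)$-double cosets in $Q$.

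Next I would extract the torsion element. The assumption $P_iR<PR$ yields $p\in P$ with $p\notin P_iR$, equivalently $P_iR\ne P_ipR$ in $P_i\backslash Q/R$. Set $y=[P_i]-[P_ip]$; this lies in $Y_i'\subseteq X$. Its image $\bar\pi_i(\bar y)=[P_iR]-[P_ipR]$ is a difference of two distinct basis vectors in the free $\mathbb{Z}/n_i\mathbb{Z}$-module $(\mathbb{Z}/n_i\mathbb{Z})[P_i\backslash Q/R]$, so it has order exactly $n_i$. Because $V_i$ has additive exponent $n_i$, so does $Y_i'$; hence $n_iy=0$ in $X$ and $n_i\bar y=0$ in $X/X\mathfrak{r}$. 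Combined with the image having order $n_i$, this forces $\bar y$ to have order exactly $n_i$, producing a cyclic subgroup of that size inside $\mathrm{tor}(X/X\mathfrak{r})$.

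The main point requiring care is the construction step: one must verify that the coordinate projection $\overline V\to V_i$ really does restrict to $U$ and, after inducing up, preserves the identification of the submodule $Y_i'$ so that $\pi_i$ fixes $Y_i'$ pointwise. Once this bookkeeping is in place, producing the torsion element is a short combinatorial consequence of the distinctness of the two double cosets afforded by the hypothesis $P_iR<PR$.
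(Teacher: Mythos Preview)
Your proof is correct and shares the same skeleton as the paper's: both pass to the projection $W\to Y_i$ and use the identification $Y_i/Y_i\mathfrak r\cong(\Z/n_i\Z)[P_iR\backslash Q]$ to detect nontrivial torsion coming from $Y_i'$. The difference is in the endgame. The paper argues globally: it computes the size of the image of $Y_i'$ in $Y_i/Y_i\mathfrak r$ as $n_i^{|Q:P_iR|-|Q:PR|}$ by comparing the ranks of the free $\Z/n_i\Z$-modules $Y_i/Y_i\mathfrak r\cong(\Z/n_i\Z)[P_iR\backslash Q]$ and $Y_i/(Y_i'+Y_i\mathfrak r)\cong(\Z/n_i\Z)[PR\backslash Q]$. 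You instead exhibit a single explicit witness $y=e_i(1-p)$ of order exactly $n_i$. Your route is slightly more elementary (you never need to identify $Y_i/Y_i'$ as a permutation module on $P\backslash Q$), while the paper's yields the sharper bound $n_i^{|Q:P_iR|-|Q:PR|}$, though only $\ge n_i$ is required for the lemma. Your careful construction of $\pi_i$ also makes explicit the step the paper states without justification, namely $Y_i'\cap X\mathfrak r\subseteq Y_i\mathfrak r$.
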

\begin{proof} 

The torsion subgroup of $\frac{X}{X\mathfrak{r}}$ contains $\frac{Y_i'}{Y_i' \cap X \mathfrak{r}}$. As $Y_i' \cap X\mathfrak{r}$ is contained in $Y_i\mathfrak{r}$, it is sufficient to estimate $\frac{Y_i'}{Y_i' \cap Y_i\mathfrak{r}} \cong \frac{Y_i'+Y_i\mathfrak{r}}{Y_i\mathfrak{r}}$. The quotient $\frac{Y_i}{Y_i \mathfrak r}$ is isomorphic to $\Z/n_i \Z[P_i R\backslash Q]$, which is a free $\Z/n_i\Z$-module of rank $|Q:P_iR|$. 

\noindent We now consider $\frac{Y_i}{Y_i'+Y_i \mathfrak{r}}$:  

$$\frac{Y_i}{Y_i'+Y_i \mathfrak{r}} \cong \frac{Y_i/Y_i'}{(Y_i/Y_i')\mathfrak{r}} \cong \Z/n_i \Z[PR\backslash Q]. $$

Therefore, $\frac{Y_i}{Y_i'+Y_i \mathfrak{r}}$ is a free $\Z/n_i\Z$-module of rank $|Q:PR|$. As $P_iR < PR$, we obtain 
$$\left |\frac{Y_i'}{Y_i' \cap X \mathfrak{r}}\right | \geq \left |\frac{Y_i'}{Y_i' \cap Y_i\mathfrak{r}}\right | = n_i^{|Q:P_iR|-|Q:PR|}>n_i.$$ 

\end{proof}

\begin{lemma}\label{rfmodule} Fix $j \in \mathbb N$ and let $(Q_i)$ be a normal chain in $Q$ such that $\bigcap_{i=1}^\infty Q_iP_j=P_j$. Then $\bigcap_{i=1}^\infty Y_j \mathfrak{q}_i=\{0\}$.
In particular $Y_j$ is residually finite as a $\Z Q$-module.
\end{lemma}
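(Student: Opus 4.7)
The plan is to identify $Y_j$ with a concrete permutation module and reduce the statement to a double-coset computation controlled by the hypothesis.

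First I would invoke transitivity of induction to rewrite
\[
Y_j \;=\; V_j\otimes_{\Z P}\Z Q \;=\; \Z/n_j\Z\otimes_{\Z P_j}\Z Q,
\]
exhibiting $Y_j$ as the permutation module $\Z/n_j\Z[P_j\backslash Q]$ on the right cosets of $P_j$ in $Q$, with $Q$ acting by right translation. Next, using that $Q_i$ is normal in $Q$, the identity $P_j g Q_i = P_jQ_i g$ shows that the $Q_i$-orbits on $P_j\backslash Q$ are precisely the right cosets of the subgroup $P_jQ_i$ in $Q$. Passing to $Q_i$-coinvariants then gives
\[
Y_j/Y_j\mathfrak q_i \;\cong\; \Z/n_j\Z[P_jQ_i\backslash Q],
\]
with the projection sending the basis element $P_j g$ to $P_jQ_i g$.

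Given a nonzero element, I would write it as $x = \sum_{\ell=1}^n a_\ell \cdot P_j g_\ell$ with $a_\ell\in(\Z/n_j\Z)\setminus\{0\}$ and the cosets $P_j g_1,\dots,P_j g_n$ pairwise distinct, so $g_m g_\ell^{-1}\notin P_j$ whenever $\ell\neq m$. Since the descending chain $(Q_iP_j)$ has intersection $P_j$, for each such pair $(\ell,m)$ there exists an index $i_{\ell,m}$ beyond which $g_m g_\ell^{-1}\notin Q_iP_j = P_jQ_i$. Taking $i$ larger than the finitely many $i_{\ell,m}$, the cosets $P_jQ_i g_1,\dots,P_jQ_i g_n$ become pairwise distinct in $P_jQ_i\backslash Q$, so the image of $x$ in $Y_j/Y_j\mathfrak q_i$ is a $\Z/n_j\Z$-linear combination of distinct basis vectors with all coefficients nonzero modulo $n_j$, hence nonzero. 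This proves $\bigcap_i Y_j\mathfrak q_i = \{0\}$.

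For the ``in particular'' clause I would appeal to $Q$ being residually finite and $P_j$ being profinitely closed to select a descending chain $(Q_i)$ of finite-index normal subgroups of $Q$ with $\bigcap_i Q_i P_j = P_j$; each $Y_j/Y_j\mathfrak q_i$ is then a finite $\Z/n_j\Z$-module of rank $[Q:P_jQ_i]$, and the main claim exhibits these finite quotients as a separating family of finite-index $\Z Q$-submodules. The only step requiring any care is the orbit identification $P_j g Q_i = P_jQ_i g$: this is precisely where normality of $Q_i$ enters, and without it one would be dealing with genuine double cosets rather than with a permutation module and the argument would break down.
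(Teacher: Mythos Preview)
Your proof is correct and follows essentially the same route as the paper: identify $Y_j$ with the permutation module $\Z/n_j\Z[P_j\backslash Q]$, and show that a nonzero element written on pairwise distinct cosets $P_jg_s$ survives in $Y_j/Y_j\mathfrak q_i$ once $i$ is large enough that the cosets $P_jQ_ig_s$ remain distinct. The paper's argument is simply a terser version of yours; your explicit identification $Y_j/Y_j\mathfrak q_i\cong\Z/n_j\Z[P_jQ_i\backslash Q]$ and the remark on where normality of $Q_i$ is used make the mechanism clearer but add nothing new.
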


\begin{proof} The module $Y_j \cong \Z / n_j \Z [P_j \backslash Q]$ is the permutation module on the cosets $P_j \backslash Q$.  If $x = \sum_{s=1}^l \lambda_s P_jg_s$  with $\lambda_s \in \Z/n_j \Z$ and $g_s \in Q$ is a non-zero element of $Y_j$ where  the cosets $(P_jg_s)_{s=1}^l$ are pairwise distinct we can find $i \in \mathbb N$ such that the cosets $\{P_jQ_ig_s\}_{s=1}^l$ are pairwise distinct. This implies that $x \not \in Y_j\mathfrak{q}_i$.

\end{proof}

\begin{lemma}\label{phk}
Suppose that $f\colon\mathbb N\to\mathbb N$ is a function. Then there exist a choice of strictly ascending sequence $n_0<n_1<n_2<\dots$ and a chain $W_0> W_1>W_2>\dots$ 
of submodules of finite index in $W$ such that $\bigcap_iW_i=0$ and $tor W_i/W_i \mathfrak{q_i}> f(|Q/Q_i|.|W/W_i|)$ for all $i \geq 1$.
\end{lemma}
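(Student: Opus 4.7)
The plan is to exploit the embedding $W\subset\bar Y=\prod_jY_j$: each $W_i$ will be defined as the kernel of a projection of $W$ onto a finite module built from only the first $i$ factors $Y_j$. Because $Y_i'$ sits in $W$ with vanishing projection $\pi_j(Y_i')=0$ for every $j\ne i$, any constraint imposed on the projections $\pi_j$ with $j<i$ will leave $Y_i'$ undisturbed, so the containment $Y_i'\subset W_i$ demanded by Lemma \ref{derivedsubgp} comes for free.

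Before choosing the sequence $(n_i)$, I would fix the chain $(Q_i)$ once and for all. Enumerate the finite-index normal subgroups $N_1,N_2,\dots$ of $Q$ (possible by residual finiteness), and for each $i\ge 1$ pick an element $p_i\in P_{i+1}\setminus P_i$ together with a finite-index normal $M_i\lhd Q$ satisfying $p_i\notin P_iM_i$ (possible since $P_i$ is closed in the profinite topology). Setting $Q_i=N_1\cap\dots\cap N_i\cap M_1\cap\dots\cap M_i$ yields a descending normal chain of finite-index subgroups with $\bigcap_i Q_i=\{1\}$, cofinal in the profinite topology of $Q$, and with $P_iQ_i\subsetneq PQ_i$ for every $i\ge 1$. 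Cofinality together with closedness of each $P_j$ forces $\bigcap_iQ_iP_j=P_j$, so Lemma \ref{rfmodule} yields $\bigcap_iY_j\mathfrak{q}_i=0$ for every $j$.

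Next I would define
\[W_i:=\bigl\{w\in W\ :\ \pi_j(w)\in Y_j\mathfrak{q}_i\text{ for every }j<i\bigr\},\]
the kernel of a $\mathbb{Z}Q$-homomorphism from $W$ into $\bigoplus_{j<i}Y_j/Y_j\mathfrak{q}_i\cong\bigoplus_{j<i}\mathbb{Z}/n_j\mathbb{Z}[P_jQ_i\backslash Q]$, whose image has order bounded by $B_i:=\prod_{j<i}n_j^{[Q:P_jQ_i]}$. Crucially, $B_i$ depends only on $Q_i$ and on $n_0,\ldots,n_{i-1}$, so after inductively fixing $n_0<\cdots<n_{i-1}$ I may pick $n_i$ strictly larger than both $n_{i-1}$ and $f(|Q/Q_i|\cdot B_i)$. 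The inclusions $\mathfrak{q}_{i+1}\subset\mathfrak{q}_i$, together with the additional condition at the new index $j=i$ in $W_{i+1}$ (which is genuine because $n_i\ge 2$ forces $Y_i\mathfrak{q}_{i+1}\ne Y_i$), yield the strict chain $W_i\supsetneq W_{i+1}$.

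The verification is then immediate. The containment $Y_i'\subset W_i$ holds because $\pi_j(Y_i')=0$ for $j\ne i$; the triviality $\bigcap_iW_i=0$ holds because any $w$ in the intersection satisfies $\pi_j(w)\in\bigcap_{i>j}Y_j\mathfrak{q}_i=0$ for every $j$, forcing $w=0$ inside $\bar Y$; and Lemma \ref{derivedsubgp} applied with $R=Q_i$ and $X=W_i$ gives $\mathrm{tor}(W_i/W_i\mathfrak{q}_i)\ge n_i^{[Q:P_iQ_i]-[Q:PQ_i]}\ge n_i>f(|Q/Q_i|\cdot|W/W_i|)$, as required. \emph{The main obstacle} is the tension between the two side-conditions on $W_i$: it must be large enough to contain $Y_i'$ (so Lemma \ref{derivedsubgp} produces large torsion) yet small enough for $\bigcap_iW_i$ to vanish; restricting the constraints to the projections $\pi_j$ with $j<i$ is the neat device that reconciles them.
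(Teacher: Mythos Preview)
Your argument is correct and follows essentially the same route as the paper: define $W_i$ as the kernel of the projection of $W$ to $\bigoplus_{j<i}Y_j/Y_j\mathfrak q_i$, observe that the index $|W:W_i|$ depends only on $n_0,\dots,n_{i-1}$, and then choose $n_i$ large enough. One small imprecision: from $n_i>f(|Q/Q_i|\cdot B_i)$ you conclude $n_i>f(|Q/Q_i|\cdot|W/W_i|)$, which tacitly assumes $f$ is monotone; either replace $f$ by $n\mapsto\max_{m\le n}f(m)$ at the outset, or note (as the paper does) that $|W/W_i|$ is itself a definite integer depending only on $n_0,\dots,n_{i-1}$ and $Q_i$, and choose $n_i$ to beat $f$ at that exact value.
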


\begin{proof}
Since $Q$ is residually finite and each $P_j$ is profinitely closed, there exists a chain $Q=Q_0>Q_1>Q_2>\dots$ of normal subgroups of finite index in $Q$ such that  $\bigcap_{i=1}^\infty P_jQ_i=P_j$, $P_jQ_j<PQ_j$ for all $j$, and
$\bigcap_iQ_i=\{1\}$. 

We construct the sequences $n_i$ by induction on $i$. The idea at the inductive step is choose $n_i$ using that $|W:W_i|$ is finite and is a function of only $n_0,\dots,n_{i-1}$. To this end we consider the map 
$$\phi_i: W \rightarrow  Y_0\oplus\dots\oplus Y_{i-1} \rightarrow \frac{Y_0} {Y_0\mathfrak{q_{i-1}}} \oplus \cdots \oplus \frac{Y_{i-1}}{Y_{i-1}\mathfrak{q_{i-1}}}$$ where the first homomorphism is the projection of $W$ onto the first $i-1$ factors of $\bar Y$. The image of $\phi_i$ is finite and we define $W_i= \ker \pi_i$. Lemma \ref{rfmodule} gives $\bigcap_{i=1}^\infty W_i =\{0\}$. Note that the index of $W_i$ divides 
$\prod_{j=0}^{i-1}|\frac{V_j}{V_j \mathfrak{q_i}}|=\prod_{j=0}^{i-1}n_j^{|Q:P_jQ_{i-1}|}$ and in particular does not depend on $n_i$. 

 Note that $W_i$ then contains the whole of $\oplus _{j\ge i}Y_j$. Thus by Lemma \ref{derivedsubgp} we have $tor W_i/W_i \mathfrak{q_i}>n_i$. We now choose $n_i>n_{i-1}$ so that $n_i>
f(|Q/Q_i|.|W/W_i|)$.
\end{proof}

\begin{proof}[Proof of Theorem 3] We define $G$ to be the semidirect product of $W$ by $Q$ where $Q$ is chosen to satisfy the conclusions of Lemma \ref{failsascendingcondition} and $W$ is constructed to satisfy the conclusions of Lemma \ref{phk}. Then the finite generation of $G$ is guaranteed because $Q$ is finitely generated and $W$ is cyclic as a $\Z Q$-module.
Theorem 3 now follows by taking $M_i=Q_i\ltimes W_i$ and using Lemmas \ref{abelianization} and \ref{derivedsubgp} together with the fact that both chains $(Q_i)$ and $(W_i)$ intersect trivially. If $Q$ is a lamplighter group, for example, then this construction produces a solvable group of derived length $3$ as promised. If $Q$ is taken to be Grigorchuk's $2$-group of intermediate growth then we can construct an amenable example that is
abelian-by-(subexponential growth). Both Grigorchuk's group and the lamplighter group have infinite abelian torsion subgroups so Lemma 
\ref{failsascendingcondition} applies.
\end{proof}

 In fact $Q$ above can be any branch group: 

\begin{proposition}\label{branch}
Let $G$ be a weakly branch group acting on a rooted tree $T$. Then $G$ does not satisfy the ascending chain condition on closed subgroups.
\end{proposition}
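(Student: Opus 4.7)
The plan is to construct, from the weakly branch hypothesis, an explicit strictly ascending chain $L_1 < L_2 < L_3 < \dots$ of subgroups of $G$, each closed in the profinite topology on $G$. I will use that $G$ acts faithfully on $T$ with nontrivial rigid stabilizer $\mathrm{rist}_G(v) \neq 1$ at every vertex $v$; faithfulness together with the finiteness of each level of $T$ makes $G$ residually finite (so its profinite topology is Hausdorff) and makes every vertex stabilizer $\mathrm{stab}_G(u)$ of finite index in $G$.

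The first step will be to choose an infinite antichain of vertices $w_1, w_2, \dots$ in $T$ whose rooted subtrees $T_{w_1}, T_{w_2}, \dots$ are pairwise disjoint. I would build this inductively using the fact that $\mathrm{rist}_G(v) \neq 1$ forces the subtree $T_v$ to admit a nontrivial automorphism and hence to branch at some descendant: at each stage, pick a branching descendant $u_i$ inside the subtree currently under consideration, declare one child of $u_i$ to be $w_{i+1}$, and continue the construction inside the subtree rooted at another child of $u_i$.

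Next, for $n \geq 1$ I set $S_n := T_{w_1} \cup \dots \cup T_{w_n}$ and define
\[
L_n \;:=\; \bigcap_{u \notin S_n} \mathrm{stab}_G(u).
\]
Each $\mathrm{stab}_G(u)$ has finite index in $G$, since the $G$-orbit of $u$ lies in the finite level of $u$, so $\mathrm{stab}_G(u)$ is open (and hence closed) in the profinite topology. Therefore $L_n$ is closed as an intersection of closed subgroups, and the containment $S_n \subseteq S_{n+1}$ yields $L_n \leq L_{n+1}$ immediately.

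Finally, I verify that the inclusions are strict, and this is where the weakly branch hypothesis is used once more: pick any nonidentity element $g \in \mathrm{rist}_G(w_{n+1})$. Then $g$ fixes every vertex outside $T_{w_{n+1}} \subseteq S_{n+1}$, so $g \in L_{n+1}$; but since $g \neq 1$ and $g$ acts trivially off $T_{w_{n+1}}$, it must move some vertex of $T_{w_{n+1}}$, and that vertex lies outside $S_n$ by pairwise disjointness of the subtrees, whence $g \notin L_n$. The main obstacle, to the extent there is one, is simply arranging the antichain of branching vertices; once that is in place the argument is essentially bookkeeping about which vertices each element of $G$ does or does not fix.
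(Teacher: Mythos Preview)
Your argument is correct and follows essentially the same strategy as the paper's proof: exploit the disjoint supports provided by rigid stabilisers to build a strictly ascending chain of pointwise vertex stabilisers, which are automatically closed since each single vertex stabiliser has finite index. The only cosmetic difference is that the paper fixes one witness vertex $k_j$ in each subtree and takes $P_j$ to be the pointwise stabiliser of the tail $\{k_i : i \geq j\}$, whereas you take $L_n$ to be the pointwise stabiliser of the entire complement of $T_{w_1}\cup\dots\cup T_{w_n}$; the strictness witnesses and the verification are otherwise identical.
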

\begin{proof} We refer the reader to the survey \cite{W} for the definitions and background theory of (weakly) branch groups.
For a vertex $v \in T$ denote by $D(v)$ the vertices of $T$ which are descendants of $v$.
Let us choose a sequence of vertices $(v_j)_{j=1}^\infty$ such that their descendants $D(v_j)$ are pairwise disjoint. Let $g_j \in G$ be a nontrivial element of the rigid stabilizer $rist_G(v_j)$ and let $k_j \in D(v_j)$ be a vertex moved by $g_j$. Let $P_j$ be the pointwise stabilizer of the vertices $\{k_i \ | \ i \geq j \}$ in $G$. Then each $P_j$ is closed in the profinite topology of $G$ and since $ g_j \in  P_{j+1} \backslash P_j$ we have $P_1 <P_2 < \cdots $ is a strictly ascending chain of subgroups.  
\end{proof}
\section{Proof of Theorem \ref{sol}} \label{proofsol}

For the proof of Theorem \ref{sol} we need some preparatory lemmas.
\medskip

We define the \emph{length} $l(G)$ of a solvable minimax group $G$ to be the number of infinite factors in a cyclic/quasicyclic series of $G$. The \emph{Hirsch length} or torsion-free rank, $h(G)$ is the number of infinite cyclic factors in such a series. For the sake of brevity we henceforth use the terminology \emph{closed subgroup} to mean a subgroup that is closed in the profinite topology: that is, any subgroup that is an intersection of subgroups of finite index. We shall use the term \emph{dense} to mean dense in the profinite topology.

The \emph{Pr\"ufer rank} of a group $G$ is the least $d\ge0$ such that every finitely generated subgroup can be generated by at most $d$ elements. 

\begin{lemma}\label{Lemma18}
Let $A$ be a torsion-free abelian minimax group. Then the Pr\"ufer rank of $A$ is equal to the minimum number of (topological) generators of the profinite completion of $A$, and is also equal to the Hirsch length of $A$.
\end{lemma}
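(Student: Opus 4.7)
The plan is to show that all three quantities (the Prüfer rank, the minimum number of topological generators $d(\hat A)$, and the Hirsch length) equal $h := h(A)$.

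For the Prüfer rank equaling $h$, I would argue directly: any finitely generated subgroup of the torsion-free abelian group $A$ is free abelian of rank at most the torsion-free rank $h$ of $A$, and hence is generated by at most $h$ elements; conversely, $h$ rationally independent elements of $A$ (which exist since $A \otimes \mathbb{Q} \cong \mathbb{Q}^h$) span a copy of $\mathbb{Z}^h$, which requires exactly $h$ generators.

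For $d(\hat A) = h$ I would use the standard fact that $d(\hat A) = \sup_N d(A/N)$ over finite-index subgroups $N \le A$, together with the observation that a finite abelian group $F$ satisfies $d(F) = \max_p \dim_{\mathbb{F}_p} F/pF$ (the number of invariant factors). Combining these, $d(A/N) \le \max_p \dim_{\mathbb{F}_p} A/pA$, so the upper bound reduces to checking $\dim_{\mathbb{F}_p} A/pA \le h$ for every prime $p$. This is the central computation: if $a_1,\dots,a_k \in A$ have $\mathbb{F}_p$-linearly independent images modulo $pA$, then they are $\mathbb{Z}$-linearly independent, since any nontrivial $\mathbb{Z}$-relation among them, after dividing by the $\gcd$ of coefficients (using torsion-freeness of $A$), would have coprime coefficients and therefore reduce to a nontrivial $\mathbb{F}_p$-relation modulo $p$. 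Hence $k \le h$, giving $d(\hat A) \le h$.

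For the matching lower bound, I would invoke the structure of a torsion-free minimax abelian group: $A$ contains a finitely generated free abelian subgroup $A_0 \cong \mathbb{Z}^h$ such that $A/A_0$ is a torsion minimax group, and such a group decomposes as a finite direct sum of cyclic and quasicyclic groups concentrated at a finite set $\pi$ of primes. For any prime $p \notin \pi$, the quotient $A/A_0$ is uniquely $p$-divisible (each primary summand is either finite of order prime to $p$, or of the form $C_{q^\infty}$ with $q \ne p$, both of which are uniquely divisible by $p$). This forces $A_0 + pA = A$ and $A_0 \cap pA = pA_0$, so $A/pA \cong A_0/pA_0 \cong \mathbb{F}_p^h$, which has $d$-rank $h$. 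Thus $d(\hat A) \ge h$. The main care in the proof is this last step, verifying the $p$-divisibility and $p$-torsion-freeness of $A/A_0$ at primes $p \notin \pi$; once that is in place the bounds close up and give $d(\hat A) = h = $ Prüfer rank of $A$.
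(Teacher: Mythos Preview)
Your argument is correct, and for the Pr\"ufer rank and for the lower bound $d(\hat A)\ge h$ it coincides with the paper's: both exhibit a free abelian subgroup $B\cong\mathbb Z^h$ with $A/B$ a $\pi$-torsion minimax group, pick a prime $p\notin\pi$, and deduce $A/pA\cong(\mathbb Z/p\mathbb Z)^h$.

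The genuine divergence is in the upper bound $d(\hat A)\le h$. The paper observes that $A/B$ is divisible (they arrange it to be a direct sum of quasicyclic groups), hence has no nontrivial finite quotients; thus $B$ is dense in $A$ for the profinite topology and its $h$ generators already topologically generate $\hat A$. You instead bound $\dim_{\mathbb F_p}A/pA\le h$ for \emph{every} prime $p$ via the linear-independence argument, and then feed this into $d(A/N)\le\max_p\dim_{\mathbb F_p}A/pA$. Your route is a little longer but has the advantage of not using the minimax hypothesis at all for this direction: it shows $d(\hat A)\le h(A)$ for any torsion-free abelian group of finite Hirsch length. The paper's density argument is shorter and more conceptual but genuinely uses the minimax structure (to get divisibility of $A/B$). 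Either approach is perfectly adequate here.
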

\begin{proof}
By the definition of minimax groups $A$ has a finitely generated free abelian subgroup $B$ such that $A/B= \prod_{q \in \pi} C_{q^\infty}$ for a finite, possibly empty set $\pi$ of primes. The Hirsch length of $h=h(A)$ is the rank of $B$. Let $p$ be a prime outside $\pi$.
Now
 $A/A^p\cong B/B^p = (\mathbb Z/ p \mathbb Z)^h$. Therefore the Pr\"ufer rank is at least as great  as $h$. Since $A$ is torsion-free, every finitely generated subgroup is free abelian of rank at most the rank of $B$, that is $h$. Hence the Pr\"ufer rank is equal to $h$.  Let $d$ be the minimum number of generators of the profinite completion of $A$. Then $d\ge h$ because $A$ has a quotient  $(\mathbb Z/ p \mathbb Z)^h$. On the other hand $B$ has rank $h$ and $A/B$ is torsion and divisible and so $B$ is dense in the profinite topology on $A$. Therefore $d\le h$ and the proof is complete.
 \end{proof}
 
 We shall need the following results about minimax groups.

\begin{theorem} \cite[Theorem 10.33]{Robinson}\label{minmax} Let $G$ be a residually finite solvable minimax group. Then 

1. $G$ is nilpotent-by-abelian-by-finite.

2. $G$ has subgroups $\{1\} =G_0 \vartriangleleft G_1 \vartriangleleft \cdots \vartriangleleft G_{k+1}=G$ such that $G/G_k$ is finite and $G_{i+1}/G_i$ is a torsion free abelian minimax group for all $i=0, 1, \ldots, k-1$. 
\end{theorem}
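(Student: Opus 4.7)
The plan is to exploit residual finiteness in order to trivialize divisible abelian subgroups, and then to linearize the action of $G$ on its Fitting subgroup so that Mal'cev's structure theorem for solvable linear groups of finite rank can be brought to bear.

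The starting observation is that any normal divisible abelian subgroup of a residually finite group must be trivial, since its image in any finite quotient is both divisible and finite, hence zero. I would apply this inside the Fitting subgroup $F=\mathrm{Fit}(G)$. Because $F$ is nilpotent minimax, its torsion subgroup $T(F)$ is a Chernikov group, i.e.\ finite-by-(divisible abelian); the divisible part of $T(F)$ is characteristic in $F$, normal in $G$, and therefore trivial. So $T(F)$ is finite and $F$ contains a characteristic torsion-free subgroup $F_{0}$ of finite index, normal in $G$.

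For part (1), I would linearize via the upper central series of $F_{0}$. Each factor $A_{i}:=Z_{i+1}(F_{0})/Z_{i}(F_{0})$ is a torsion-free abelian minimax group of Hirsch length $h_{i}$ embedding in $\mathbb{Q}^{h_{i}}$, and conjugation yields a homomorphism $\varphi\colon G\to\prod_{i}\mathrm{Aut}(A_{i})\hookrightarrow\prod_{i}GL_{h_{i}}(\mathbb{Q})$. Elements in $\ker\varphi$ act trivially on every factor of a finite $G$-invariant central series in $F_{0}$, so by a Kaloujnine-Fitting style argument they act nilpotently on $F_{0}$, and their preimage is a nilpotent normal subgroup of $G$; by maximality of Fitting it lies in $F$, so $\varphi$ is injective on $G/F$. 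Therefore $G/F$ embeds in a solvable subgroup of bounded Hirsch length in a product of general linear groups over $\mathbb{Q}$, and Mal'cev's theorem (every such solvable group is triangularizable on a subgroup of finite index) implies that $G/F$ has a finite-index subgroup which is nilpotent-by-abelian, giving the required nilpotent-by-abelian-by-finite structure on $G$.

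For part (2), part (1) together with the finiteness of $T(F)$ yields a torsion-free normal subgroup $N$ of finite index in $G$. Since $N$ is torsion-free solvable minimax, its derived series has abelian minimax factors, which I would refine by pulling back the torsion subgroup of each factor. Because $N$ is torsion-free, a short argument forces every refined factor to be torsion-free abelian minimax, producing the chain $1=G_{0}\triangleleft\cdots\triangleleft G_{k}=N\triangleleft G_{k+1}=G$ with $G/N$ finite. The main obstacle is the end of Step~(1): passing from Mal'cev's structure on the image $\varphi(G/F)$ to a single nilpotent-by-abelian layer on a finite-index subgroup of $G$ requires checking that the unipotent part of the image, combined with $F$ itself, still assembles into a nilpotent normal layer, which is precisely where the Kaloujnine-Fitting principle has to be applied a second time together with maximality of Fitting.
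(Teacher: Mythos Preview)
The paper does not give its own proof of this theorem: it is quoted verbatim as \cite[Theorem 10.33]{Robinson} and used as a black box in Section~\ref{proofsol}. So there is no in-paper argument to compare your proposal against.

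That said, your outline follows the standard route taken in Robinson's text: kill the divisible radical by residual finiteness, linearize the action on a torsion-free nilpotent normal subgroup, and invoke Mal'cev's triangularizability theorem. Two points deserve more care than you give them. First, the claim that $\ker\varphi\subseteq F$ does not follow merely from the stability-group (Kaloujnine) argument: stabilizing the upper central series of $F_{0}$ only tells you that $\ker\varphi/C_{G}(F_{0})$ is nilpotent, and you still need $C_{G}(F_{0})$ to be virtually inside $F$. This uses the fact that $C_{G}(\mathrm{Fit}(G))\subseteq\mathrm{Fit}(G)$ for solvable minimax groups together with $[F:F_{0}]<\infty$; you allude to this at the end but it should be stated explicitly. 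Second, in part~(2) the assertion that ``part~(1) together with the finiteness of $T(F)$ yields a torsion-free normal subgroup $N$ of finite index'' is not automatic: you must also control torsion above $F$, which again goes through residual finiteness applied to the abelian minimax quotient. Once $N$ is torsion-free nilpotent-by-abelian, refining a central-by-abelian series does give the required chain with torsion-free abelian minimax factors.
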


\begin{lemma}\cite[Lemma 2.5]{kw}.\label{Lemma19}
Let $G$ be a residually finite solvable minimax group and let $A$ be an abelian normal subgroup of $G$. Then the subspace topology on $A$ induced by the profinite topology on $G$ coincides with the profinite topology on $A$.
\end{lemma}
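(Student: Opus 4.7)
The plan is to verify the non-trivial inclusion: every finite-index subgroup $B$ of $A$ contains an intersection $A\cap N$ for some finite-index normal subgroup $N\trianglelefteq G$; the opposite direction is automatic. I would first reduce to $B$ being normal in $G$. Since $A$ is abelian minimax, it has only finitely many subgroups of each fixed finite index (every such subgroup contains $eA$ for $e=\exp(A/B)$, and $A/eA$ is finite for abelian minimax groups because the divisible part contributes $0$ to $A/eA$). Hence $B$ has only finitely many $G$-conjugates, and its $G$-core $\bigcap_{g\in G}gBg^{-1}$ still has finite index in $A$. Next I would reduce to torsion-free $A$: any nontrivial divisible normal subgroup of $G$ would lie in every finite-index normal subgroup of $G$ and hence in their intersection, which is trivial by residual finiteness; therefore the torsion subgroup $T$ of $A$ is finite and its finitely many nontrivial elements can be separated from $1$ directly by residual finiteness of $G$.

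With $A$ torsion-free abelian minimax of Hirsch length $h$, Lemma \ref{Lemma18} gives that every finite-index subgroup contains $nA$ for $n=\exp(A/B)$. It therefore suffices to show that $nA$ is closed in the profinite topology of $G$ for every $n\ge 1$, equivalently that the finite group $A/nA$ embeds into a finite quotient of $G$. For this I would induct on the length of a minimax normal series for $G$ supplied by Theorem \ref{minmax}. At the inductive step, take the smallest nontrivial term $G_1$ of the series, which is torsion-free abelian normal in $G$, and analyse the two cases $G_1\cap A=1$ and $G_1\cap A\neq 1$ separately. In each case one has a short exact sequence $1\to A'\to A\to A''\to 1$ of $G$-modules with $A'$ and $A''$ of strictly smaller Hirsch length than $A$; one applies the inductive hypothesis to each factor and combines via a diagram chase between the profinite topology of $A$ and the subspace topologies inherited from $G$.

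The main obstacle is that quotients of residually finite groups need not be residually finite, so the induction cannot be applied naively inside $G/G_1$. The workaround is to argue inside $G$ throughout: for each nontrivial coset $a+nA\in A/nA$, use residual finiteness of $G$ to find a finite-index normal subgroup $N_a\trianglelefteq G$ with $a\notin N_a$, and then use the structural information coming from the induction on the minimax series, together with the rank control supplied by Lemma \ref{Lemma18}, to adjust $N_a$ so that it contains $nA$. Taking the intersection of the $N_a$ over the finite set $A/nA\setminus\{0\}$ then yields the required finite-index normal $N\trianglelefteq G$ with $A\cap N\le nA$, which completes the proof.
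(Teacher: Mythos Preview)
The paper does not prove this lemma; it is quoted from \cite[Lemma~2.5]{kw}, so there is no argument in the paper to compare your outline against.

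Your preliminary reductions (to a $G$-normal $B$, to torsion-free $A$, hence to showing that $nA$ is closed in $G$) are reasonable, but the inductive step has a genuine gap. You correctly diagnose that one cannot run the induction inside $G/G_1$ because residual finiteness need not survive quotients, and you propose instead to find, for each $a\in A\setminus nA$, a finite-index normal $N_a\trianglelefteq G$ with $a\notin N_a$ and then ``adjust $N_a$ so that it contains $nA$''. But this adjustment \emph{is} the content of the lemma, and you give no mechanism for it. The obvious candidate $N_a\cdot nA$ is still normal of finite index (since $nA$ is characteristic in $A$, hence normal in $G$), yet nothing prevents $a\in N_a\cdot nA$. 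Appealing to ``rank control from Lemma~\ref{Lemma18}'' or ``structural information from the induction'' is not an argument: Lemma~\ref{Lemma18} only bounds $|A/nA|$ and says nothing about how to separate the cosets of $nA$ inside $\widehat G$. Your case split on $G_1\cap A$ also does not guarantee that both pieces $A'$, $A''$ have strictly smaller Hirsch length --- if $A\le G_1$ you get $A'=A$ and no progress.

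A further warning: the statement as literally written is not true without an extra hypothesis. Take $G=\mathbb Z[1/p]$ and $A=\mathbb Z$: here $G$ is residually finite, abelian, minimax, and $A$ is normal, yet every finite-index subgroup of $G$ has index prime to $p$, so the induced topology on $A$ is strictly coarser than the profinite topology on $\mathbb Z$ (it misses $p\mathbb Z$). In the paper's only application (inside Lemma~\ref{Lemma20}) the subgroup $A$ is assumed to be closed in $G$, which rules out this example; any correct proof must exploit some such hypothesis, and your outline never invokes it.
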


\begin{lemma}\label{Lemma20}
Let $G$ be a residually finite group with the maximal condition on closed subgroups. Suppose that $A$ is a closed abelian normal subgroup such that $G/A$ is solvable and minimax. Then there is a unique normal closed subgroup $B$ of $G$ minimal subject to the condition that $A/B$ is torsion-free, and minimax.
\end{lemma}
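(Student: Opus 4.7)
\emph{Plan.} I would construct $B$ as the intersection of the natural candidate family and then show this intersection is in fact attained as a finite intersection, using the maximum condition on closed subgroups.

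\textbf{Set-up.} Let $\mathcal{F}$ be the family of closed normal subgroups $B'$ of $G$ with $B'\leq A$ and $A/B'$ torsion-free minimax. Then $\mathcal{F}$ is non-empty because $A\in\mathcal{F}$. It is closed under finite intersection: the diagonal embedding $A/(B_1'\cap B_2')\hookrightarrow A/B_1'\oplus A/B_2'$ realises the left side as a subgroup of a torsion-free abelian minimax group, and this class is closed under subgroups and finite direct sums. Set $B:=\bigcap_{B'\in\mathcal{F}}B'$. Then $B$ is closed and normal in $G$, contained in $A$, and the embedding $A/B\hookrightarrow\prod_{B'\in\mathcal{F}}A/B'$ shows $A/B$ is torsion-free. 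Any minimum element of $\mathcal{F}$ must equal $B$, so uniqueness will be automatic; it therefore suffices to prove $A/B$ is minimax.

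\textbf{Reduction to finite intersections.} Since $\mathcal{F}$ is closed under finite intersection, showing $A/B$ is minimax reduces to showing that $B$ coincides with a finite intersection of elements of $\mathcal{F}$. Suppose otherwise. By a greedy choice one obtains a strict descending chain $D_1>D_2>\cdots$ in $\mathcal{F}$ with $\bigcap_n D_n=B$. Each successive kernel $D_n/D_{n+1}$ is a non-trivial subgroup of the torsion-free abelian minimax group $A/D_{n+1}$, hence is itself torsion-free abelian minimax of Hirsch length at least one. Additivity of Hirsch length in the short exact sequences $0\to D_n/D_{n+1}\to A/D_{n+1}\to A/D_n\to 0$ then gives $h(A/D_n)\to\infty$.

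\textbf{Main obstacle.} The crux of the proof is to derive a contradiction with the maximum condition on closed subgroups of $G$ from this descending chain. The strategy is to exploit that each $A/D_n$, being a minimax $\mathbb{Z}[G]$-module of Hirsch length $h(A/D_n)$, is both Artinian and Noetherian as a $G$-module, and admits a chain of closed $G$-invariant subgroups of length $h(A/D_n)\to\infty$. Pulling these back to $A$ produces chains of closed $G$-normal subgroups in $A$ (sitting above $D_n$) of unbounded length. The delicate point is to choose these chains coherently in $n$ so that they assemble into a single infinite strictly ascending chain of closed subgroups of $G$, contradicting the hypothesis. A natural way to ensure coherence is to pick a functorial filtration (an ascending Loewy or composition series, or one taken with respect to the rational/height structure) that is compatible with the natural quotient maps $A/D_{n+1}\twoheadrightarrow A/D_n$. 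Theorem~\ref{minmax} can be invoked to pass to a finite-index situation where $G/A$ is nilpotent-by-abelian-by-finite, making the required functorial construction on the $G$-module $A/D_n$ more tractable. Any such construction produces an infinite strict ascending chain of closed $G$-normal subgroups inside $A$, contradicting the maximum condition on closed subgroups of $G$. This forces $A/B$ to be minimax, proving that $B$ is the desired (unique) minimum element of $\mathcal{F}$.
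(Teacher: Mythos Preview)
Your set-up and reduction are fine: defining $\mathcal F$, showing it is closed under finite intersections, setting $B=\bigcap\mathcal F$, and reducing to a uniform bound on the Hirsch lengths $h(A/D_n)$ are all correct and natural. The gap is in the ``Main obstacle'' paragraph. Having, for each $n$, a chain of closed subgroups of length $h(A/D_n)$ between $D_n$ and $A$ does \emph{not} by itself contradict the ascending chain condition: ACC forbids a single infinite strict ascending chain, not arbitrarily long finite ones (think of the ideals $(2^n)\subset(2^{n-1})\subset\cdots\subset(1)$ in $\mathbb Z$). To salvage your approach you would need to splice these finite chains coherently into one infinite ascending chain, and your appeal to a ``functorial filtration compatible with the maps $A/D_{n+1}\twoheadrightarrow A/D_n$'' is not a proof: you neither specify the filtration nor check that its terms are closed in $G$ (pure subgroups of torsion-free abelian minimax groups are not automatically closed, and $G$-invariance is a further constraint you invoke without justification). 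The side claim that $A/D_n$ is Artinian and Noetherian as a $\mathbb Z[G]$-module is also unsupported.

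The paper's argument avoids this difficulty by bounding $h(A/B')$ uniformly from the outset rather than arguing by contradiction. The maximal condition on closed subgroups immediately yields a finitely generated dense subgroup $A_0\le A$, say on $d$ generators; hence the closure $\overline A$ of $A$ in $\widehat G$ is topologically $d$-generated, and so is $\overline A/\overline{B'}$ for every $B'\in\mathcal F$. Lemma~\ref{Lemma19} identifies $\overline A/\overline{B'}$ with the profinite completion of $A/B'$, and Lemma~\ref{Lemma18} then gives $h(A/B')\le d$. One simply takes $B_0\in\mathcal F$ with $h(A/B_0)$ maximal; any strictly smaller $B_1\in\mathcal F$ would force $h(A/B_1)>h(A/B_0)$, a contradiction. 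This replaces your unfinished chain-splicing by a two-line Hirsch-length bound.
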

\begin{proof} Let $\mathcal B$ be the set of all subgroups $B$ of $A$ that are closed and normal in $G$ and such that $A/B$ is torsion free and minimax.
Let $A_0$ be a finitely generated dense subgroup of $A$. Such a subgroup exists because $A$ has the maximal condition on closed subgroups. Let $d$ be the minimum number of generators of $A_0$. Then the closure $\overline A$ of $A$ in the profinite completion $\widehat G$ of $G$ can be (topologically) generated by $d$ elements and hence so is $\overline A/\overline B$ for any $B\in\mathcal B$, where $\overline B$ denotes the closure of $B$ in $\widehat G$.  By Lemma \ref{Lemma19}, $\overline A/\overline B$ is isomorphic to the profinite completion of $A/B$ and hence by Lemma \ref{Lemma18}, the Hirsch length of $A/B$ is at most $d$. Let $B_0 \in \mathcal B$ be a subgroup such that the Hirsch length of $A/B_0$ is largest possible. If $B_1 \in \mathcal B$ with $B_ 1 < B_0$ then  $B_0/B_1$ contains an infinite cyclic subgroup, so $h(B_0/B_1) >0$ and $h(A/B_1) > h(A/B_0)$ contradiction. So $B_0$ is the required minimal subgroup.
\end{proof}

One direction of Theorem 5 is relatively easy to prove and does not require the hypotheses of finite generation or residual finiteness:

\begin{lemma}
Let $G$ be a virtually solvable minimax group. Then $G$ satisfies the maximal condition on closed subgroups.
\end{lemma}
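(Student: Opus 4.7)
The plan is a reduction followed by an induction along the structural series from Theorem \ref{minmax}, with the torsion-free abelian case isolated as a sub-lemma.

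\textbf{Reductions.} If $N \triangleleft G$ is a normal solvable minimax subgroup of finite index, then $N$ is open in the profinite topology of $G$ and the subspace topology on $N$ coincides with its own profinite topology. For any ascending chain $(H_i)$ of closed subgroups of $G$, the chain $(H_i \cap N)$ is closed in $N$ while each $H_i/(H_i\cap N)$ embeds in the finite group $G/N$, so the maximum condition on closed subgroups passes from $N$ to $G$; hence we may assume $G$ itself is solvable minimax. Let $D$ be the largest normal divisible torsion subgroup of $G$. Since a divisible torsion group has no non-trivial finite quotient, $D$ is contained in every finite-index, hence every profinitely closed, subgroup of $G$; closed subgroups of $G$ correspond bijectively to those of $G/D$, so we may further reduce to the case $D = 1$. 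By standard facts for solvable minimax groups this forces $G$ to be residually finite.

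\textbf{Sub-lemma.} For a torsion-free abelian minimax group $A$ --- equivalently, a finitely generated $\Z[1/S]$-module for some finite prime set $S$ --- the profinitely closed subgroups of $A$ are exactly its $\Z[1/S]$-submodules. Indeed, every finite-index subgroup $F \le A$ has index coprime to $S$ (each $s \in S$ is a unit of $A$, and so acts invertibly on $A/F$), so if $h \in F$ then $s^{-1}h \in F$; conversely, every $\Z[1/S]$-submodule has residually finite quotient and is thus closed. Noetherianity of $\Z[1/S]$ then gives the maximum condition on profinitely closed subgroups of $A$.

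\textbf{Induction.} By Theorem \ref{minmax} and a further finite-index reduction, we may assume $G$ has a normal series $1 = G_0 \triangleleft G_1 \triangleleft \cdots \triangleleft G_k = G$ in which every factor is torsion-free abelian minimax. We induct on $k$; the base $k=0$ is immediate. For $k \ge 1$, the subgroup $G_1$ is closed in $G$ (residual finiteness of $G$ and of $G/G_1$), and by Lemma \ref{Lemma19} the subspace topology on $G_1$ equals its own profinite topology. For an ascending chain $(H_i)$ of closed subgroups of $G$, the chain $(H_i \cap G_1)$ stabilises by the sub-lemma. Granted that each image $H_iG_1/G_1$ is closed in $G/G_1$, the chain $(H_iG_1/G_1)$ also stabilises by the inductive hypothesis, and a Dedekind argument --- once $H_i \cap G_1$ and $H_iG_1$ are both constant, $H_{i+1} \cap H_iG_1 = H_i(H_{i+1}\cap G_1) = H_i$, while $H_{i+1} \cap H_{i+1}G_1 = H_{i+1}$ and $H_iG_1 = H_{i+1}G_1$, forcing $H_i = H_{i+1}$ --- completes the induction.

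\textbf{Main obstacle.} The crux is therefore to verify that, for $H \le G$ closed, the image $\pi(H)$ under the quotient $\pi : G \to G/G_1$ is again closed in $G/G_1$; in general, images of closed subgroups under continuous quotient maps need not be closed. By the sub-lemma applied to the torsion-free abelian minimax group $G/G_1$, it suffices to show that $\pi(H)$ is a $\Z[1/S]$-submodule of $G/G_1$. This structural compatibility is established using the closedness of $H$ in $G$ together with the $\Z[1/S]$-module structures on the successive abelian factors of the series; crucially, any abstract group automorphism of a $\Z[1/S]$-module is automatically $\Z[1/S]$-linear, so conjugation in $G$ respects these module structures. Carrying out this verification --- essentially showing that closed subgroups of $G$ respect the $\Z[1/S]$-structure layer by layer --- is the main technical obstacle of the proof, and once done the Dedekind argument of the previous paragraph completes the induction.
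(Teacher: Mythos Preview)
Your proposal has two genuine gaps.

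\textbf{The sub-lemma rests on a false equivalence.} Not every torsion-free abelian minimax group is a finitely generated $\Z[1/S]$-module. Take $A=\Z\oplus\Z[1/2]$: it is torsion-free abelian minimax (with $B=\Z^2$ and $A/B\cong C_{2^\infty}$), yet $A$ is not $p$-divisible for any prime $p$ and is not finitely generated over $\Z$, so it carries no $\Z[1/S]$-module structure for any finite $S$. Consequently your identification ``closed subgroups $=\Z[1/S]$-submodules'' is unavailable for a general factor $G_1$, and the Noetherian conclusion does not follow. The paper handles the abelian minimax case by a different and self-contained argument: for a strictly ascending closed chain $(H_j)$ in an abelian minimax group $A$, the minimax length $l(H_j)$ is bounded and non-decreasing, hence eventually constant, forcing $|H_{j+1}:H_j|<\infty$ thereafter; then $\bigl(\bigcup_jH_j\bigr)/H_1$ is infinite, torsion, abelian minimax, and residually finite (since $H_1$ is closed), which is impossible because such a group would have to contain a quasicyclic subgroup.

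\textbf{The ``main obstacle'' is left open, and the sketched attack is misdirected.} You explicitly acknowledge that the crux --- showing that $\pi(H)=HG_1/G_1$ is closed in $G/G_1$ whenever $H$ is closed in $G$ --- remains to be carried out. Your outlined route is to apply the sub-lemma to ``the torsion-free abelian minimax group $G/G_1$'' and recognise $\pi(H)$ as a $\Z[1/S]$-submodule; but for $k\ge 3$ the quotient $G/G_1$ is not abelian, so the sub-lemma does not even apply, and the phrase ``closed subgroups of $G$ respect the $\Z[1/S]$-structure layer by layer'' is not given content. The paper bypasses this by asserting that the maximal condition on closed subgroups is extension-closed and then reducing to the abelian case; you have correctly located the delicate point in any such inductive argument, but you have not resolved it.
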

\begin{proof}
Clearly we may assume that $G$ is residually finite and hence by Theorem \ref{minmax}, $G$ has a subgroup of finite index which is poly-(torsion-free abelian minimax). Therefore it suffices to establish the result for abelian minimax groups because the class of groups satisfying the maximal condition on closed subgroups is extension closed. Suppose $(H_j)$ is a strictly ascending chain of  closed subgroups in an abelian minimax group denoted by $A$. Since $l(H_j)$ is an increasing bounded sequence of integers we may assume that $l(H_j)$ is constant. This implies that $|H_{j+1}:H_j|$ is finite for all $j$. Now let $A_0= \cup_j H_j$. The quotient $A_0/H_1$ is an infinite torsion abelian minimax group, which is residualy finite since $H_1$ is closed in $A$. Hence $A_0/H_1$ cannot contain quasicyclic subgroups and hence must be finite. This means that the chain $H_j$ stabilizes, a contradiction. 
\end{proof}

The other direction of Theorem 5 is more subtle. We shall use the following theorem of 
Mal$'$cev, \cite[Theorem 3.6]{inflinear}:

\begin{theorem}\label{malcev}
There is a constant $\mu(n)$ depending only on $n$ such that
any solvable linear group of degree $n$ contains a triangularizable normal subgroup of finite index at most $\mu(n)$.
\end{theorem}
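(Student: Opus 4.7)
The plan is to pass to the Zariski closure in the ambient algebraic group $GL_n(\overline K)$, where $\overline K$ is the algebraic closure of the base field, and then appeal to the Lie--Kolchin theorem. Let $G \leq GL_n(\overline K)$ be a solvable linear group and let $\overline G$ denote its Zariski closure. The first step is to verify that $\overline G$ is solvable. Since commutation is a morphism of algebraic varieties and the Zariski closure of a product of two subsets contains the product of their closures, one checks that $[\overline A, \overline B] \subseteq \overline{[A,B]}$ for subgroups $A,B$. Iterating yields $\overline{G^{(i)}} \supseteq (\overline G)^{(i)}$, so the derived series of $\overline G$ terminates at least as quickly as that of $G$ and $\overline G$ is solvable.

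Next, consider the identity component $\overline G^{0}$, a closed, connected, solvable algebraic subgroup of $GL_n(\overline K)$. The Lie--Kolchin theorem provides a basis of $\overline K^n$ in which all elements of $\overline G^0$ are simultaneously upper triangular. Therefore $G \cap \overline G^{0}$ is a normal triangularizable subgroup of $G$, of index at most $[\overline G : \overline G^0]$. What remains is to bound this index by a constant $\mu(n)$ depending only on $n$.

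For the index bound, my plan is to exploit the structure of solvable algebraic groups. Choose a maximal torus $T$ in $\overline G^0$; any two such tori are $\overline G^0$-conjugate, so $\overline G$ acts on the $\overline G^0$-conjugacy class of $T$ and the component group $\overline G/\overline G^0$ embeds into the normalizer--modulo--centralizer $N_{GL_n}(T)/C_{GL_n}(T)$ up to a kernel lying in $C_{GL_n}(T) \cap \overline G$ modulo $\overline G^0$. The image lies in a subgroup of the permutations of the weights of $T$, which has order at most $n!$. The kernel, intersected with $\overline G$ and taken mod $\overline G^0$, is a finite solvable group acting on the unipotent radical and on $T$; a Jordan-type argument (in characteristic $0$, Jordan's theorem proper; in positive characteristic, its analogue for finite solvable groups of $GL_n$) bounds its order in terms of $n$. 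Multiplying the two bounds gives the required $\mu(n)$.

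The main obstacle is the final index bound: the Zariski-closure and Lie--Kolchin steps are standard, but establishing an explicit uniform $\mu(n)$ requires a careful decomposition of $\overline G$ into its unipotent radical, a maximal torus, and a finite component-controlling piece, together with a Jordan-type bound on the latter. Granting this, the subgroup $G \cap \overline G^0$ is the triangularizable normal subgroup of finite index at most $\mu(n)$ promised by the theorem.
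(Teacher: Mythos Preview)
The paper does not prove this theorem; it is quoted without proof from Wehrfritz's \emph{Infinite Linear Groups} (Theorem~3.6). So there is no argument in the paper to compare yours against.

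Your attempt, however, contains a genuine gap. The Zariski-closure and Lie--Kolchin steps are standard and correct, but the index $[\overline G:\overline G^{0}]$ is \emph{not} bounded by any function of $n$. Take $G=\langle\mathrm{diag}(\zeta_m,1,\dots,1)\rangle\le GL_n(\mathbb C)$ with $\zeta_m$ a primitive $m$-th root of unity: since $G$ is finite it equals its own Zariski closure, so $\overline G^{0}=\{1\}$ and $[\overline G:\overline G^{0}]=m$ is arbitrary. In this example your maximal torus $T\subset\overline G^{0}$ is trivial, $N_{GL_n}(T)/C_{GL_n}(T)$ is trivial, and the entire component group lies in what you call the ``kernel''. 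The Jordan-type theorems you then invoke do not bound the \emph{order} of a finite (solvable) linear group of degree $n$; they only produce an abelian normal subgroup of bounded index --- which is essentially the statement you are trying to establish and cannot be assumed.

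The upshot is that the promised triangularizable subgroup is in general strictly larger than $G\cap\overline G^{0}$; in the example above $G$ itself is already diagonal. The standard proof does not try to bound the component group at all. One route: pass to the semisimplification, i.e.\ the image of $G$ in $\prod_i GL(V_i/V_{i-1})$ for a $G$-composition series $0=V_0<\cdots<V_r=\overline K^{\,n}$. The kernel of this map is unipotent, and the image is a completely reducible solvable linear group of degree $n$, for which a Clifford-theory induction on $n$ (permuting the weight spaces of a maximal abelian normal subgroup) yields an abelian normal subgroup of index bounded in terms of $n$. Its preimage in $G$ is then unipotent-by-abelian, hence triangularizable, with the same bounded index.
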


\begin{proof}[Proof of Theorem \ref{sol}]
 (1)$\Rightarrow$(2) This is a special case of the above Lemma.
 \medskip
 
(2)$\Rightarrow$(1) We first deal with the case when $G$ is nilpotent-by-abelian. Let $N=[G,G]$. By assumption $N$ is nilpotent, hence $N$ is minimax if and only if $N/[N,N]$ is minimax. In particular, if $G$ is not minimax then the metabelian group $G/[N,N]$ is not minimax and since all finitely generated metabelian groups are residually finite (see Hall's classic paper \cite{Hall}), we may replace $G$ by $G/[N,N]$ and assume $G$ is metabelian. Now \cite{Hall} shows that there is a finite set of primes $\pi$ such that $[G,G]$ has a free abelian group $B$ such that $G/B$ is $\pi$-torsion. Moreover if $G$ is not minimax then $B$ must have infinite rank. Therefore, taking a prime $p\notin\pi$ we have that $[G,G]/[G,G]^p$ is an elementary abelian $p$-group of infinite rank. Since $[G,G]$ is closed in $G$ it follows that $[G,G]^p$ is closed. Now we can pass to a quotient $G/[G,G]^p$ which has an infinite torsion subgroup. Since all finite subgroups are closed, then it is clear that there are strictly ascending chains.
\medskip

We next consider the general solvable case. Suppose that $G$ is finitely generated, residually finite and solvable and that $G$ satisfies the maximal condition on closed subgroups. Let $A$ be the centre of the centralizer of the last non-trivial term of the derived series of $G$. Then $A$ is closed and $G/A$ is residually finite of smaller derived length, therefore we may assume that $G/A$ is minimax by induction. The torsion subgroup of $A$ must be finite otherwise $G$ would contain a strictly ascending chain of finite subgroups contradicting the maximal condition on closed subgroups. By factoring out this finite subgroup  we may assume that $A$ is torsion-free.

Using Lemma \ref{Lemma20} let $D$ be the unique normal subgroup of $G$ minimal amongst the closed normal subgroups $B$ of $G$ such that $B\subset A$ and $A/B$ is torsion-free, and minimax. 
Then $G/D$ is a residually finite minimax group.
We claim that $D=1$ which implies the result.

Suppose by way of contradiction that $D$ is non-trivial so $D$ is infinite since $D \leq A$ and $A$ is torsion free. As in the proof of Lemma \ref{Lemma20} the ascending condition on closed subgroups implies that the closure $\overline D$ of $D$ in the profinite topology of $G$ is $d$-generated for some $d \in \mathbb N$. For each prime $p$ let $D_p$ denote the intersection of all the normal subgroups $E$ of finite index in $G$ such that $D/D\cap E$ is an elementary abelian $p$-group. Since $D$ is closed in $G$, $D_p \leq D$  and $\dim_{\mathbb F_p} D/D_p \leq d$ for all $p$.  In addition when $D \not =1$ then $D>D_p$ for at least one prime $p$ because $G$ is residually finite. 

We consider first the case when there are infinitely many primes $p$ such that $D_p<D$. 
In that case let $K = \cap_p D_p$, then $D/K$ is infinite. Mal\'cev's Theorem \ref{malcev} applied to the action of $G/K$ on each $\mathbb F_p$-vector space $D/D_p$ gives that  that $G/C_G(D/D_p) \leq GL(d, \mathbb F_p)$ is (nilpotent of class at most $d$)-by-abelian-by-(finite of size at most $\mu(d)$). Let $U_p/C_G(D/D_p)$ be the Fitting subgroup of $G/C_G(D/D_p)$. So $G/U_p$ is abelian by (finite of size at most $\mu(d)$) while $U_p$ acts on $D/D_p$ as a nilpotent group of class at most $d$.
Let $W= \cap_p U_p$, then $G/W$ is abelian-by-finite while $W$ acts on $D/K$ as a nilpotent group of class at most $d$. Let $L/D$ be the Fitting subgroup of  $G/D$. Since  $G/D$ is minimax and so nilpotent-by-abelian-by-finite, it follows  that $G/L$ is abelian-by-finite. 

So $G/(W \cap L)$ is abelian-by-finite while $(W \cap L)/D$ is a nilpotent group which acts nilpotenly on $D/K$. Hence $(W \cap L)/K$ is nilpotent and so $G/K$ is nilpotent-by-abelian-by-finite.

Since $G/K$ is also residually finite, $G/K$ must be minimax by the first case we considered. The maximal condition on closed subgroups implies that the torsion subgroup $T/K$ of $D/K$ is finite,  this means that $D/T$ is torsion free infinite and minimax, and contradicts the minimality of $D$.

 The next case to consider is that there are only finitely many primes $p$ for which $D_p<D$. Since $D$ is infinite there must be a prime $p$ such that the Sylow $p$ subgroup $\overline L_p$ of $\overline D$ is infinite. Recall that  $\overline D$ is $d$ generated, therefore $\overline L_p$ is isomorphic to $F\oplus\Z^k_p$ for some finite $p$-group $F$ and some $k \in [1,d]$. 
Let $D/E$ be the image of $D$ in $\overline L_p$, i.e $E$ is the intersection of all normal subgroups $Y$ of finite index in $G$ such that $D/ D \cap Y$ is a $p$-group.  

Mal$'$cev's theorem implies that $G$ acts on $\overline L_p$ as a nilpotent-by-abelian-by-finite group, this time by appeal to the case of characteristic zero. A similar argument as the case above gives that $G/E$ is a residually finite, nilpotent by abelian by finite group and $D/E$ is infinite. 
This again provides a contradiction to the minimality of $D$.
Therefore $D=1$ after all, and $G$ is minimax.

\medskip

For the general case of an elementary amenable group $G$ we proceed as follows.
Define the hierarchy of classes of elementary amenable groups in the following way. For each ordinal $\alpha$, let $\mathfrak X_\alpha$ be the class of groups defined by
\begin{enumerate}
\item for $\alpha=0$ then $\mathfrak X_0$ is the class of all finitely generated abelian-by-finite groups;
\item
if $\alpha$ is a successor ordinal then $\mathfrak X_\alpha$ consists of the class of all groups which are extensions of a locally $\mathfrak X_{\alpha -1}$-group by an $\mathfrak X_0$-group;
\item
if $\alpha$ is a (non-zero) limit ordinal then $\mathfrak X_\alpha$ consists of all groups belonging to some $\mathfrak X_\beta$ for some $\beta<\alpha$.
\end{enumerate}
Let $G$ be a finitely generated residually finite elementary amenable group with the maximal condition on closed subgroups. If $G$ is not virtually solvable then there exists a least ordinal $\alpha$ such that $G$ has a finitely generated subgroup $H$ in $\mathfrak X_\alpha$ which is not virtually solvable. The minimality of $\alpha$ implies that $\alpha$ is a successor ordinal. Then $H$ has a normal subgroup $K \in \mathfrak X_{\alpha -1}$, while the quotient $H/K$ is abelian-by-finite. The choice of $\alpha$ ensures that $K$ is locally (virtually solvable). Note that $K$ cannot be virtually solvable itself because this would imply that the extension $H$ is also virtually solvable. 

We consider two cases:

Case 1: $K$ is in the variety generated by some of its finitely generated subgroups, say $L$. Since $L$ is virtually solvable it follows that $K$ has a normal solvable subgroup $U$ such that $K/U$ is in the variety generated by some finite group. In particular $K/U$ is locally finite. Let $\bar U$ be the closure of $U$ in the topology of $K$ induced from the profinite topology of $G$. Then $\bar U$ is solvable and $K/ \bar U$ is locally finite. If $K/ \bar U$ is finite then $K$ is virtually solvable, contradition. If $K/ \bar U$ is infinite then $K$ fails the maximal condition on closed subgroups.  

Case 2: $K$ is not in the variety generated by any of its finitely generated subgroups. 
Inside $K$ it is therefore possible to choose a strictly ascending chain $1<K_0<K_1<K_2<\dots$ of  finitely generated virtually solvable subgroups each of which does not belong to the variety of groups generated by its predecessor. Since the closure (in the ambient group $G$) of any $K_i$ belongs to the variety that $K_i$ generates it follows that the closures of the subgroups in this chain also ascend strictly. This contradicts the ascending chain condition and so we deduce that $G$ is virtually solvable after all. The result now follows from the solvable case already considered.
\end{proof}

\bigskip

\emph{Acknowledgements.} The authors wish to thank  A. Garrido, M. Kassabov, R. Sauer, A. Thom and J. Wilson for helpful comments.

\bigskip

\noindent 
\small{A. Kar, University of Southampton, a.kar@soton.ac.uk \\ 
P. H. Kropholler,  University of Southampton, p.h.kropholler@soton.ac.uk \\ 
N. Nikolov, University of Oxford, nikolov@maths.ox.ac.uk}

\bibliography{KKN}
\bibliographystyle{abbrv}

\end{document}